\newtheorem{theorem}{Theorem}[section]
\newcommand{\N}{\mathbb{N}}
\newcommand{\C}{\mathbb{C}}
\begin{document}

\begin{flushleft}
\Large 
\noindent{\bf \Large Qualitative reconstruction methods for imaging interior Robin interfaces in EIT from Robin-to-Dirichlet data}
\end{flushleft}

\vspace{0.2in}

{\bf  \large Rafael Ceja Ayala}\\
\indent {\small School of Mathematical and Statistical Sciences, Arizona State University, Tempe, AZ 85287 }\\
    \indent {\small Email: \texttt{rcejaaya@asu.edu}}\\

{\bf  \large Malena I. Español}\\
\indent {\small School of Mathematical and Statistical Sciences, Arizona State University, Tempe, AZ 85287 }\\
\indent {\small Email: \texttt{mespanol@asu.edu} }\\

{\bf  \large Govanni Granados}\\
\indent {\small Department of Mathematics,
University of North Carolina, Chapel Hill, NC 27599 }\\
\indent {\small Email:  \texttt{ggranad@unc.edu} }\\

\begin{abstract}
\noindent We consider an inverse shape problem arising in electrical impedance tomography (EIT) for nondestructive testing, in which interior defects are modeled through Robin transmission conditions. Unlike classical formulations, we impose Robin boundary conditions on both the exterior measurement surface and the interior interface, and use the Robin-to-Dirichlet (RtD) map as the available data. Within this setting, we develop qualitative (non-iterative) reconstruction methods based on the Linear Sampling Method (LSM) and the Regularized Factorization Method (RFM), and derive new analytical characterizations that enable these methods to identify interior regions. We further propose a numerical implementation that incorporates regularization strategies and demonstrate, through experiments, that the methods reliably reconstruct interior regions of interest. 
\end{abstract}

\noindent {\bf Keywords}: electrical impedance tomography, regularized factorization method, linear sampling method, Robin transmission condition. \\ 

\noindent {\bf MSC}: 35R30, 35J25, 47A52, 65N21

\section{Introduction}
The problem we study in this paper is motivated by applications of electrical impedance tomography (EIT)~\cite{eit-review, EIT-cheney, mueller-book} for nondestructive testing, in which the goal is to identify internal structures within a material using electrostatic measurements taken at its surface. In practice, such measurements are often affected by the electrode-surface contact impedance, which is well modeled by Robin boundary conditions (see \cite{harris2,harris1} for similar problems in EIT). This leads to an inverse shape problem in which the available data are given by the Robin-to-Dirichlet (RtD) map, which assigns to each applied impedance (Robin) boundary data the corresponding voltage distribution on the boundary.

In particular, we focus on recovering an interior region where a Robin transmission condition is imposed, which models corrosion on the boundary of an inclusion within the material. In the context of nondestructive testing, such internal defects must be identified without compromising the material’s overall structure, especially in industrial settings where removing or damaging the surrounding healthy material is not feasible, safe, or practical~\cite{tallman2020structural}. These considerations motivate the use of qualitative reconstruction methods, which are designed to infer internal features from boundary data without relying on heavy prior information. Such methods have been successfully employed in several tomography inverse problems \cite{GLSM, DSM-EIT, DSM-DOT,EIT-FM,MUSIC-Hanke, MUSIC-Hanke2, harris1, MUSIC-sweep}.

While much of the previous literature in EIT has addressed inverse parameter problems using the Neumann-to-Dirichlet (NtD) (e.g., \cite{eit-transmission2,eit-transmission1}) or Dirichlet-to-Neumann (DtN) maps (e.g. \cite{EIT-granados1, EIT-granados2}), our focus is on the inverse shape problem using the RtD map. The primary contribution of this work is to analytically show that the RtD map uniquely determines the location and shape of the interior region, and to numerically implement the corresponding sampling algorithms for reconstructing these regions. More specifically, we propose two qualitative reconstruction approaches that require factorizations of the data operator, which depends on the RtD map. These methods are non-iterative and require minimal a priori knowledge about the unknown region. Unlike iterative methods, which often rely on a good initial guess and repeated solutions of forward and adjoint problems, our methods identify the region of interest by linking it to the range or spectral structure of the data operator. This leads to computationally efficient algorithms based on spectral or singular value decompositions of the measured data operator.

This work extends existing shape reconstruction techniques to settings in which Robin-type conditions govern both the measurement boundary and the interior interface. Such settings are more physically realistic and introduce new analytical and computational challenges that we address by developing robust and efficient qualitative reconstruction methods. Specifically, we adapt two classical methods: the Linear Sampling Method (LSM) and a variant of the Factorization Method, namely the Regularized Factorization Method (RFM), to this Robin-Dirichlet framework. These methods have been widely used in inverse problems for both source identification and shape reconstruction, and we demonstrate their effective application to the present setting. For previous works in LSM see \cite{GLSM, cakoni-colton, cakoni-colton-haddar, cheney1, colton-haddar-monk} and for RFM see \cite{cakoni-harris, EIT-granados1, EIT-granados2, harris1}.

Both LSM and RFM recover the region of interest by testing sampling points in the domain through the solution of a discrete ill-posed problem derived from a factorization of the data operator. While LSM relies on a factorization, RFM employs a more refined decomposition of the data operator constructed from the same boundary measurements, yielding a stronger analytical result. For both methods, we adopt filter functions corresponding to the regularization schemes: Spectral cutoff, Tikhonov, and Truncated Total Least Squares (TTLS). Ultimately, we construct an imaging functional for each method exhibiting binary behavior, with large values inside and small values outside the region of interest. Notably, both LSM and RFM rely solely on boundary data to solve the inverse shape problem (see \cite{harris2025sampling} for related applications to recovering buried corroded boundaries using Cauchy data).

The rest of the paper is organized as follows. In Section~\ref{dp-ip}, we rigorously define the direct and inverse problems under consideration. We first establish the well-posedness of the direct problem and define the data operator, which is used to derive our imaging functionals for both the LSM and RFM. Next, we derive an initial factorization of this operator to apply the LSM in Section~\ref{LSM}. Subsequently, we derive a more detailed factorization of the data operator to apply the RFM in Section~\ref{RegFM}. This method follows the theory developed in \cite{harris1}, and yields an efficient imaging functional for reconstructing the shape of the interior regions. In Section~\ref{section: numerics}, numerical examples in the unit disk in $\mathbb{R}^2$ are presented to validate the analysis of the proposed imaging functionals. Lastly, in Section~\ref{end}, we conclude the paper by summarizing the results and outlining possible future directions.

\section{The direct and inverse problem}\label{dp-ip}
We start by studying the direct problem associated with electrostatic imaging in the presence of an interior, corroded interface. This problem incorporates an exterior impedance (Robin) boundary condition and an interior Robin transmission condition. Assume that $ \Omega \subset \mathbb{R}^d$, where $d=2$ or $d=3$, is a simply connected open domain with Lipschitz boundary $\partial \Omega$. Let $D \subset \Omega$ be an open (potentially multiple) connected subset with $\mathcal{C}^2$ boundary. We further assume that $\text{dist}(\partial \Omega, \overline{D}) > 0$. Thus, the electrostatic potential $u \in H^{1} (\Omega)$ satisfies \\
\begin{wrapfigure}{r}{0.40\textwidth}
    \centering
    \vspace{-1em}
    \begin{tikzpicture}[scale=1.5]
        \draw[thick] (0,0) ellipse (1.7 and 1.35);
        \node at (-1.3,0.3) {\(\Omega\)};
        \node at (1.6,0.9) {\(\partial\Omega\)};

        \draw[thick, dashed] (0,0) circle(0.6);
        \node at (0,0) {\(D\)};
        \node at (0.8,0.1) {\(\partial D\)};

        \draw[->, thick] 
            (1.0, -0.25) .. controls (0.85, -0.5) and (0.7, -0.35) .. (0.66, -0.15);
        \node at (0.9, -0.5) {\scriptsize $+$};

        \draw[->, thick] 
            (0.1, -0.45) .. controls (0.2, -0.5) and (0.4, -0.4) .. (0.5, -0.2);
        \node at (0.25, -0.35) {\scriptsize $-$};
    \end{tikzpicture}
    \vspace{-50pt}
\end{wrapfigure}
\vspace{-25pt}  

\begin{equation}\label{mpde}
\begin{cases}
-\nabla \cdot (\sigma \nabla u) = 0, & \text{in } \Omega \setminus  \partial D, \\[1.2ex]
\sigma\, \partial_{\nu} u + u = f, & \text{on } \partial \Omega, \\[1.2ex]
\left[\!\left[\sigma \partial_{\nu} u\right]\!\right] = \gamma u, & \text{on } \partial D,
\end{cases}
\end{equation}
where \[\left[\!\left[\sigma \partial_{\nu} u\right]\!\right] :=  (\sigma \partial_{\nu} u)^{+} - (\sigma \partial_{\nu} u)^{-},\]
for a given $f \in H^{-1/2} ( \partial\Omega )$. For the rest of the paper, we denote $\nu$ as the unit outward normal on the boundaries $\partial D$ and $\partial \Omega$. The `$+$' notation indicates the trace from the exterior domain $\Omega \setminus \overline{D}$, while the `$-$'  corresponds to the trace from the interior region $D$. The Robin transmission condition given in \eqref{mpde} models the jump in current across the interface $\partial D$ and states that it is proportional to the electrostatic potential $u$. Note that under the assumption that $u \in H^1(\Omega)$, it follows that $ [\![u ]\!] = 0$ on $\partial D$ due to the fact that any function in $H^1(\Omega)$ possesses equal exterior and interior trace across any interface within $\Omega$. 

The boundary condition on $\partial \Omega$ is motivated by the complete electrode model (see, e.g., \cite{completedEITmodel}), where a finite number of electrodes are placed on the exterior boundary. Thus, \eqref{mpde} is the continuous analog of the complete electrode model as we assume continuous contact of an electrode on $\partial \Omega$; for exposition, we assume the contact impedance is 1. This motivates the use of the Robin-to-Dirichlet map in the construction of the data operator, to be defined shortly, for recovering the interior region $D$. 

We also assume that the transmission parameter $\gamma \in L^{\infty} ( \partial D )$ and the conductivity parameter $\sigma > 0$ is constant. For the analysis of well-posedness and the forthcoming investigation of the inverse problem, we assume that there exist constants $\gamma_{\text{min}}$ and $\gamma_{\text{max}}$ such that 
$$0< \gamma_{\text{min}} \leq  \gamma (x) \leq \gamma_{\text{max}}  \quad \text{for a.e. } x \in \partial D. $$ 

The direct problem corresponding to \eqref{mpde} can be stated as follows: given impedance $f$, determine the resulting electrostatic potential of the medium $\Omega$ containing the inclusion and interface. We proceed to show that the boundary value problem \eqref{mpde} is well-posed for any given $f \in H^{-1/2} ( \partial \Omega)$. To this end, we consider Green's 1st Theorem on the region $\Omega \backslash \overline{D}$ and $D$ separately, giving us the following integrals: 
$$\int_{\Omega \backslash \overline{D}} \sigma \nabla u \cdot \nabla \overline{\varphi} \, \text{d}x = \int_{\partial \Omega}   \overline{\varphi} \sigma  \partial_{\nu} u \, \text{d}s - \int_{\partial D} \overline{\varphi}  (\sigma \partial_{\nu} u)^{+} \, \text{d}s $$
and 
$$ \int_{D} \sigma \nabla u \cdot \nabla \overline{\varphi} \, \text{d}x = \int_{\partial D} \overline{\varphi}  (\sigma \partial_{\nu} u)^{-} \, \text{d}s $$ 
for any test function $\varphi \in H^1 (\Omega)$. By adding these two integral equations, we obtain
\[ \int_{\Omega} \sigma \nabla u \cdot \nabla \overline{\varphi} \, \text{d}x=  \int_{\partial \Omega}  \overline{\varphi} \sigma \partial_{\nu} u  \, \text{d}s -  \int_{\partial D} \overline{\varphi} \gamma u  \, \text{d}s,\]
where we have used the Robin transmission condition on $\partial D$. By applying the impedance boundary condition on $\partial \Omega$, one obtains
\begin{equation}\label{vf}
    \int_{\Omega} \sigma \nabla u \cdot \nabla \overline{\varphi} \, \text{d}x +  \int_{\partial \Omega}  \overline{\varphi}  u  \, \text{d}s + \int_{\partial D} \overline{\varphi} \gamma u  \, \text{d}s= \int_{\partial \Omega}  \overline{\varphi} f   \, \text{d}s.
\end{equation}
We use \eqref{vf} to define the sesquilinear form $A(\cdot , \cdot):H^{1}(\Omega) \times H^{1}(\Omega) \mapsto \mathbb{C}$ as 
\[A(u,\varphi)= \int_{\Omega} \sigma\nabla u \cdot \nabla \overline{\varphi} \, \text{d}x +  \int_{\partial \Omega}  \overline{\varphi}  u  \, \text{d}s + \int_{\partial D} \overline{\varphi} \gamma u  \, \text{d}s\]
and the conjugate linear functional $\ell (\cdot): H^{1}(\Omega) \mapsto \mathbb{C}$ as
\[\ell(\varphi)=\int_{\partial \Omega}  \overline{\varphi} f   \, \text{d}s.\]
Thus, the variational formulation of \eqref{mpde} can be rewritten as: find $u \in H^{1}(\Omega)$ such that 
\[A(u, \varphi) = \ell (\varphi) \quad \text{for all} \quad \varphi \in H^{1}(\Omega).\]
It is clear that the sesquilinear form $A(\cdot, \cdot)$ is bounded, whereas its coercivity on $H^1(\Omega)$ can be shown by appealing to a standard Poincaré-type argument, which implies that
$$\| \cdot \|_{H^1(\Omega)}^2 \quad \text{is equivalent to } \quad \int_{\Omega} |\nabla\cdot|^2 \, \text{d}x+\int_{\partial \Omega} |\cdot|^2 \, \text{d} s.$$
See \cite[Chapter~8]{salsa} for more details. It also holds that $\ell(\cdot)$ is a conjugate linear and bounded functional on  $H^{1} (\Omega)$. By the Trace Theorem, we have that 
$$| A(u, \varphi ) | \leq C \|{f}\|_{H^{-1/2} (\partial\Omega)} \|{\varphi}\|_{H^{1} (\Omega)}.$$ Thus, by the Lax-Milgram Lemma, there is a unique solution $u \in H^{1}(\Omega)$ of \eqref{mpde} satisfying 
$$\|{u}\|_{H^{1} (\Omega)} \leq C \|{f}\|_{H^{-1/2} (\partial \Omega)},$$ 
for a constant $C>0$. The above analysis gives the following result.

\begin{theorem}\label{thm:soln-op}
The solution operator corresponding to the boundary value problem \eqref{mpde} $f \mapsto u$ is a bounded linear mapping from $H^{-1/2}(\partial \Omega)$ to $H^{1}(\Omega)$.
\end{theorem}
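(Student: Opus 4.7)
The plan is to recognize that essentially all the ingredients for the proof have been assembled in the discussion immediately preceding the statement, and to package them cleanly through the Lax--Milgram Lemma applied to the variational formulation \eqref{vf}. The argument has three logically separate pieces: (i) equivalence of the PDE \eqref{mpde} with the variational problem $A(u,\varphi) = \ell(\varphi)$ for all $\varphi \in H^1(\Omega)$; (ii) verification of the hypotheses of Lax--Milgram for $A$ and $\ell$ on $H^1(\Omega)$; and (iii) reading off the boundedness of the solution operator from the resulting a priori estimate.

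For step (i), I would formalize the integration-by-parts computation already sketched, noting that the derivation of \eqref{vf} is reversible: a distributional argument shows that any $u \in H^1(\Omega)$ solving $A(u,\varphi) = \ell(\varphi)$ for all $\varphi \in H^1(\Omega)$ must satisfy $-\nabla\cdot(\sigma\nabla u) = 0$ in the sense of distributions on $\Omega \setminus \partial D$, while testing against $\varphi$ supported near $\partial\Omega$ and $\partial D$ recovers the Robin boundary condition and the Robin transmission condition, respectively. This uses that $u \in H^1(\Omega)$ automatically satisfies $[\![u]\!] = 0$ on $\partial D$, as noted in the text.

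For step (ii), boundedness of $A$ is immediate from Cauchy--Schwarz in each term combined with the continuity of the trace operators $H^1(\Omega) \to H^{1/2}(\partial\Omega)$ and $H^1(\Omega) \to H^{1/2}(\partial D)$ together with the $L^\infty$ bound on $\gamma$. Coercivity is the key analytical point: since $A(u,u) \geq \min(\sigma, 1)\bigl(\int_\Omega |\nabla u|^2\,\mathrm{d}x + \int_{\partial\Omega} |u|^2\,\mathrm{d}s\bigr)$ (the interior Robin term is nonnegative by the sign assumption on $\gamma$ and may be discarded), I would invoke the norm-equivalence result cited from \cite[Chapter~8]{salsa}, namely that $\|\cdot\|_{H^1(\Omega)}^2$ is equivalent to $\int_\Omega |\nabla \cdot|^2\,\mathrm{d}x + \int_{\partial\Omega} |\cdot|^2\,\mathrm{d}s$. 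This Poincaré-type equivalence is the main obstacle in the sense that it is the only nontrivial input not manipulable by elementary inequalities; once cited, coercivity follows directly. Boundedness of $\ell$ follows from the definition of the $H^{-1/2}$--$H^{1/2}$ duality pairing on $\partial\Omega$ combined with the trace inequality.

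For step (iii), Lax--Milgram yields a unique $u \in H^1(\Omega)$ solving the variational problem, together with the estimate $\|u\|_{H^1(\Omega)} \leq C \|\ell\|_{(H^1(\Omega))^*} \leq C \|f\|_{H^{-1/2}(\partial\Omega)}$, where the constant depends only on $\sigma$, $\gamma_{\max}$, the trace constant, and the coercivity constant of $A$. Uniqueness ensures that the map $f \mapsto u$ is well-defined and linear, while the estimate gives boundedness between the stated spaces, completing the proof.
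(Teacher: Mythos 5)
Your proposal is correct and follows essentially the same route as the paper: derive the variational formulation \eqref{vf} via Green's first identity on $\Omega\setminus\overline{D}$ and $D$, verify boundedness and coercivity of $A$ (the latter via the Poincar\'e-type norm equivalence from \cite[Chapter~8]{salsa}, discarding the nonnegative $\gamma$-term), and apply Lax--Milgram to obtain existence, uniqueness, and the stability estimate $\|u\|_{H^1(\Omega)} \leq C\|f\|_{H^{-1/2}(\partial\Omega)}$. Your additional remarks on recovering the PDE and the transmission condition from the variational problem are a slight elaboration beyond what the paper writes out, but the argument is the same.
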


With the well-posedness of \eqref{mpde} established, we now turn to the formulation of the inverse shape problem. Our results will focus on comparing voltage measurements with and without an inclusion. To this end, we consider the unperturbed Robin problem in $\Omega$ (without an inclusion): given $f\in H^{-1/2}(\partial \Omega)$, determine $u_0\in H^1(\Omega)$ such that 
\begin{equation}\label{bvp1}
    -\nabla\cdot(\sigma \nabla u_0) = 0 \quad \text{in} \quad \Omega \quad \text{with} \quad \sigma \partial_{\nu}u_0+u_0 = f\quad \text{on}\quad \partial  \Omega.
\end{equation} 
Similarly as \eqref{mpde}, one can show the well-posedness of \eqref{bvp1} with stability estimate 
$$\|{u_0}\|_{H^{1} (\Omega)} \leq C \|{f}\|_{H^{-1/2} (\partial \Omega)},$$ 
via the Lax-Milgram Lemma. To reiterate, $u_0$, the solution of \eqref{bvp1}, represents the electrostatic potential of $\Omega$ without an interior region. That is, there is no corrosion on an interior interface. Furthermore, $u_0$ is assumed to be known since $\Omega$ is given.

To define the data operator, we assume that Robin boundary data $f$ is applied to $\partial \Omega$ and the induced voltages are given by $u \rvert_{\partial \Omega}$ and $u_0 \rvert_{\partial \Omega}$, respectively. A critical difference between these Dirichlet traces is that $u \rvert_{\partial \Omega}$ is measured data, whereas $u_0 \rvert_{\partial \Omega}$ can be computed. We define the {\it Robin-to-Dirichlet} operators $M \enspace \text{and} \enspace M_0 \colon H^{-1/2} ( \partial \Omega) \rightarrow H^{1/2} ( \partial\Omega)$ given by
\begin{equation*}\label{RtDoperators}
 M f =  u \big|_{\partial \Omega} \quad \text{and} \quad M_{0} f =  u_0 \big|_{\partial \Omega},
\end{equation*}
where $u$ and $u_0$ are the unique solutions to \eqref{mpde} and \eqref{bvp1}, respectively. It has been shown in \cite{RtD} that the operators are well-defined. By Theorem \ref{thm:soln-op}, the well-posedness of \eqref{bvp1}, and the Trace Theorem, the RtD mappings are bounded linear operators. Thus, we define the data operator as the difference between $M$ and $M_0$. More precisely,
\begin{equation}\label{op:M-M0}
    (M-M_0)\colon H^{-1/2} ( \partial \Omega) \rightarrow H^{1/2} ( \partial\Omega) \quad \text{given by} \quad (M-M_0)f = u-u_0 \big \rvert_{\partial \Omega}.
\end{equation} 
The inverse problem we consider in this work can be stated as follows: determine the shape and location of $D$ using impedance-voltage data provided by the difference of the Robin-to-Dirichlet maps, $(M-M_0)$. 

From voltage data, we will develop two qualitative sampling methods to detect the interior region $D$ without requiring prior knowledge of the transmission parameter $\gamma$ or any assumptions about the number of interior regions. In the following section, we provide the analysis and derivation of our reconstruction methods.

\section{\textbf{Reconstruction Methods}} \label{Factorization section}
In this section, we focus on the preliminaries required to develop the two qualitative methods for reconstructing the interior region $D$ from knowledge of $M$: LSM and RFM. We begin by introducing the necessary operators and outlining a series of results to apply the LSM and the RFM to solve the inverse shape problem. Our goal in this section is to factorize the data operator $(M - M_0)$ in two ways: a first factorization sufficient for the LSM (Section 3.1), and a refined symmetric factorization needed for the RFM (Section 3.2). To this end, we will introduce and study operators that connect jumps in the current across $\partial D$ to boundary traces on $\partial \Omega$. We will show that the RFM is mathematically more advantageous than the LSM, as it provides necessary and sufficient conditions for recovering $D$. We now begin the factorization of $(M-M_0)$ to obtain a more explicit relationship between surface measurements and $D$. During this process, we will describe some important properties of the components in the factorization. 

Motivated by the data operator $(M - M_0 )$, we note that $u - u_0 \in H^1(\Omega)$ solves 
\begin{align*}
    - \nabla\cdot \sigma \nabla (u-u_0 ) &= 0 \quad \text{in} \quad \Omega \setminus \partial D \quad  \text{with} \quad \sigma \partial_{\nu}(u-u_0 )+(u-u_0 ) = 0\quad \text{on}\quad \partial  \Omega\\
&\text{and} \quad  [\![\sigma\partial_\nu (u - u_0) ]\!] \big|_{\partial D} = \gamma u\big|_{\partial D} .
\end{align*} 
So, we present an auxiliary problem where $w\in H^{1} (\Omega )$ solves
\begin{align}  \label{w}
- \nabla \cdot(\sigma\nabla &w) = 0 \quad \text{in} \quad \Omega \setminus \partial D \quad \text{with} \quad \sigma \partial_{\nu}w +w = 0\quad \text{on}\quad \partial  \Omega\\
&\text{and}\quad \quad [\![\sigma\partial_\nu w ]\!]\big|_{\partial D} = \gamma h \nonumber ,
\end{align}
for any given $h \in L^{2} ( \partial D)$. The well-posedness of \eqref{w} follows a similar variational argument used in Section \ref{dp-ip}. As a consequence, we define the bounded linear Source-to-Dirichlet (StD) operator 
\begin{equation}\label{op:G}
    G \colon L^{2} ( \partial D ) \rightarrow H^{1/2} (\partial \Omega ) \quad \text{given by} \quad Gh = w \big \rvert_{\partial \Omega},
\end{equation}
where $w$ is the unique solution of \eqref{w}. The boundedness of $G$ follows from the well-posedness of \eqref{w}. Recall that we aim to recover $D$ from physical measurements on $\partial \Omega$. To this end, we connect the StD operator $G$ to the data operator $(M-M_0)$. Observe that by the well-posedness of \eqref{w}, it also holds that
$$w \big \rvert_{\partial \Omega}= ( M - M_0 ) f \quad \text{ provided that } \quad h = u \big|_{\partial D},$$ 
where $u$ is the unique solution to \eqref{mpde}. This motivates defining the solution operator of \eqref{mpde} as
\begin{equation}\label{op:S}
    S \colon H^{-1/2} ( \partial \Omega ) \rightarrow L^{2} (\partial D ) \quad \text{given by} \quad Sf = u \big|_{\partial D}.
\end{equation}
The StD operator $G$ composed with the solution operator $S$ provides a preliminary decomposition of the data operator. That is, we have that 
\begin{equation}\label{eqn:GSf}
(M-M_0)f = GSf
\end{equation}
for any $f \in H^{-1/2} ( \partial \Omega)$. We note that the left-hand-side of \eqref{eqn:GSf} has an implicit dependency on $D$ given that $(M-M_0)$ maps from $H^{-1/2}(\partial \Omega)$ to $H^{1/2}(\partial \Omega)$. Whereas, the right-hand-side of \eqref{eqn:GSf} is a factorization involving $D$, namely, $L^{2}(\partial D)$.
The following results demonstrate some key analytical properties of the StD operator $G$ needed to further describe the data operator $(M - M_0)$.
\begin{theorem}\label{thm:G-injective}
    The StD operator $G$ as defined in \eqref{op:G} is injective. 
\end{theorem}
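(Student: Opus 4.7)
The plan is to show that if $Gh = 0$ then $h = 0$, by propagating the vanishing of $w$ on $\partial\Omega$ throughout $\Omega$ and extracting $h$ from the jump condition on $\partial D$.

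First, suppose $h \in L^2(\partial D)$ satisfies $Gh = 0$, so that the unique solution $w \in H^1(\Omega)$ of \eqref{w} has $w|_{\partial\Omega} = 0$. Combining this with the Robin condition $\sigma \partial_\nu w + w = 0$ on $\partial\Omega$, one also obtains $\sigma \partial_\nu w = 0$ on $\partial\Omega$, so $w$ has vanishing Cauchy data on $\partial\Omega$.

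Next, restrict attention to the exterior region $\Omega \setminus \overline{D}$. Since $\sigma>0$ is a constant and $-\nabla\cdot(\sigma\nabla w)=0$ there, $w$ is harmonic in $\Omega\setminus\overline D$. Because $\mathrm{dist}(\partial\Omega,\overline D)>0$, the component of $\Omega\setminus\overline D$ that touches $\partial\Omega$ is a connected open set on which $w$ is harmonic with zero Cauchy data on a relatively open piece of its boundary. By the Holmgren/unique continuation theorem for harmonic functions, $w \equiv 0$ on this component, and hence $w\equiv 0$ in all of $\Omega \setminus\overline D$ (if $D$ is disconnected, the same reasoning applies component by component, using that each component of $\Omega\setminus\overline D$ adjacent to $\partial D$ is eventually connected to $\partial\Omega$). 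In particular, $w^+ = 0$ and $(\sigma\partial_\nu w)^+ = 0$ on $\partial D$.

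Now pass to the interior region $D$. The continuity of the trace across $\partial D$ for $H^1(\Omega)$ functions gives $w^- = w^+ = 0$ on $\partial D$, so $w|_D$ solves a homogeneous Dirichlet problem for $-\nabla\cdot(\sigma\nabla\cdot)=0$ in $D$; by uniqueness $w\equiv 0$ in $D$, whence $(\sigma\partial_\nu w)^- = 0$ on $\partial D$. Substituting into the jump condition,
\[
\gamma h = [\![\sigma\partial_\nu w]\!]\big|_{\partial D} = (\sigma\partial_\nu w)^+ - (\sigma\partial_\nu w)^- = 0 \quad \text{on } \partial D.
\]
Since $\gamma \geq \gamma_{\min} > 0$ a.e.\ on $\partial D$, this forces $h = 0$ in $L^2(\partial D)$, establishing injectivity of $G$.

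The only nontrivial step is the unique continuation from $\partial\Omega$ into $\Omega\setminus\overline D$; this is where the geometric hypothesis $\mathrm{dist}(\partial\Omega,\overline D)>0$ is used to guarantee that the vanishing Cauchy data lies on a genuine open portion of the boundary of the exterior component, so that Holmgren's theorem applies cleanly. Everything else reduces to standard well-posedness of the interior Dirichlet problem and the transmission condition.
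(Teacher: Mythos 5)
Your proof is correct and follows essentially the same route as the paper's: vanishing Cauchy data on $\partial\Omega$ plus Holmgren's theorem forces $w\equiv 0$ in $\Omega\setminus\overline{D}$, the homogeneous Dirichlet problem then gives $w\equiv 0$ in $D$, and the jump condition together with $\gamma\geq\gamma_{\min}>0$ yields $h=0$. You simply spell out the final extraction of $h$ and the connectivity considerations more explicitly than the paper does.
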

\begin{proof}
    Suppose that $G h = 0$. Then, $w$ satisfies $-\nabla \cdot (\sigma \nabla w ) = 0$ in $\Omega \setminus D$ with $w = \partial_{\nu} w = 0 $ on $\partial \Omega$. By Holmgren's Theorem~\cite{holmgren}, this implies that $w = 0$ in $\Omega \setminus D$. Thus, $(\sigma \partial_{\nu} w)^+ \rvert_{\partial D} = 0$. It also follows that $-\nabla \cdot (\sigma \nabla w ) = 0$ in $D$ with $w = 0$ on $\partial D$. Thus, $w = 0$ in $D$ and $(\sigma \partial_{\nu}w)^- \rvert_{\partial D} = 0$, which proves the claim.
\end{proof}

To further analyze the StD operator $G$, we will compute its adjoint, $G^{*}$. To do so, we first define the sesquilinear dual-product
\begin{equation}
    \langle \varphi , \psi \rangle_{\partial \Omega} = \int_{\partial \Omega} \varphi \overline{\psi} \, \text{d}s \quad \text{for all} \quad \varphi \in H^{1/2}(\partial \Omega) \enspace \text{and} \enspace \psi \in H^{-1/2}(\partial \Omega)
\end{equation}
between the Hilbert Space $ H^{1/2}(\partial \Omega)$ and its dual space $ H^{-1/2}(\partial \Omega)$, where $L^{2}(\partial \Omega)$ is their Hilbert pivot space. Then, the adjoint operator $G^*$ will be a mapping from $H^{-1/2}(\partial \Omega)$ into $L^{2}(\partial D)$, which can be derived via the equality
$$\langle G \varphi , \psi \rangle_{\partial \Omega} = ( \varphi , G^* \psi )_{L^{2}(\partial D)} \quad \text{for all} \enspace \varphi \in L^{2}(\partial D) \enspace \text{and} \enspace \psi \in H^{-1/2}(\partial \Omega).$$
Our next result describes the associated boundary-value problem and some important properties of the adjoint $G^*$.

\begin{theorem}\label{CompactG}
    The adjoint $G^{*}\colon H^{-1/2}(\partial \Omega) \rightarrow L^{2}(\partial D)$ is given by $G^{*} \psi = - \gamma v$, where $v \in H^{1}(\Omega)$ satisfies
    \begin{equation*}
        -\nabla \cdot \sigma \nabla v = 0 \enspace \text{in} \enspace \Omega \setminus \partial D \quad \text{with} \quad \sigma \partial_{\nu}v + v = \psi \enspace \text{on} \enspace \partial \Omega \quad \text{and} \quad \left[\!\left[ \sigma \partial_{\nu} v\right]\!\right] = 0 \enspace \text{on} \enspace \partial D.
    \end{equation*}
    Furthermore, $G^{*}$ is compact with dense range.
\end{theorem}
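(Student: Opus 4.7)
The plan is to identify $G^*$ by a direct Green's identity calculation, and then deduce compactness from interior elliptic regularity and density from the injectivity already established in Theorem~\ref{thm:G-injective}. First, for any $\psi \in H^{-1/2}(\partial \Omega)$, I would introduce the auxiliary problem stated for $v$. Since the transmission condition $[\![\sigma \partial_\nu v ]\!] = 0$ on $\partial D$ together with $v \in H^1(\Omega)$ reduces this to the unperturbed Robin problem \eqref{bvp1} with datum $\psi$, its well-posedness and the estimate $\|v\|_{H^1(\Omega)} \le C \|\psi\|_{H^{-1/2}(\partial \Omega)}$ are immediate from the arguments already given.

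Next, to identify the action of $G^*$, I would take $\varphi \in L^2(\partial D)$, let $w = G\varphi$ solve \eqref{w} with $h=\varphi$, and apply Green's first identity separately on $\Omega \setminus \overline{D}$ and on $D$. Summing, and using the Robin exterior condition and the transmission jump for $w$, yields
\[
\int_\Omega \sigma \nabla w \cdot \nabla \overline{v}\, \text{d}x = -\int_{\partial \Omega} w \overline{v}\, \text{d}s - \int_{\partial D} \gamma \varphi \overline{v}\, \text{d}s.
\]
Performing the symmetric computation with $v$ and $w$ interchanged (noting $[\![\sigma\partial_\nu v]\!]=0$ eliminates the $\partial D$ term) and using $\sigma \partial_\nu v + v = \psi$ on $\partial \Omega$ gives
\[
\int_\Omega \sigma \nabla v \cdot \nabla \overline{w}\, \text{d}x = \int_{\partial \Omega} \overline{w}(\psi - v)\, \text{d}s.
\]
Conjugating the second identity (using that $\sigma$ and $\gamma$ are real-valued) and comparing with the first cancels the boundary term on $\partial \Omega$ that involves $v\overline{w}$, leaving $\int_{\partial \Omega} w \overline{\psi}\, \text{d}s = -\int_{\partial D} \gamma \varphi \overline{v}\, \text{d}s$. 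This is exactly $\langle G\varphi, \psi\rangle_{\partial \Omega} = (\varphi, -\gamma v)_{L^2(\partial D)}$, so $G^* \psi = -\gamma v$.

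For compactness, the key is the standing assumption $\text{dist}(\partial \Omega, \overline{D}) > 0$. Because $v$ is harmonic (with $\sigma$ constant) in a neighborhood of $\overline{D}$ and has $H^1$ energy controlled by $\|\psi\|_{H^{-1/2}(\partial \Omega)}$, interior elliptic regularity on any open set compactly contained in $\Omega$ and containing $\overline{D}$ gives $\|v\|_{H^2(\mathcal{U})} \le C\|\psi\|_{H^{-1/2}(\partial \Omega)}$ for some neighborhood $\mathcal{U}$ of $\overline{D}$. The trace $v|_{\partial D}$ therefore lies in $H^{3/2}(\partial D)$ with uniform bound, and since $\partial D$ is $\mathcal{C}^2$ the embedding $H^{3/2}(\partial D) \hookrightarrow L^2(\partial D)$ is compact. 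Multiplication by $\gamma \in L^\infty(\partial D)$ preserves this compactness, so $G^*$ is compact.

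Finally, density of the range follows by an abstract duality argument: with the pivot pairing used to identify $H^{-1/2}(\partial \Omega)$ with the dual of $H^{1/2}(\partial \Omega)$, one has $\overline{\text{range}(G^*)} = \ker(G)^\perp$ in $L^2(\partial D)$, and $\ker(G) = \{0\}$ by Theorem~\ref{thm:G-injective}. Hence $\overline{\text{range}(G^*)} = L^2(\partial D)$. The main delicate step is the Green's identity computation with the correct handling of complex conjugates and the two separate jump contributions; once those cancel cleanly, the remaining arguments are standard.
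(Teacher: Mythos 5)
Your identification of $G^{*}\psi=-\gamma v$ is correct and is in substance the paper's computation: you apply Green's first identity twice (once with $w$ against $\overline{v}$, once with $v$ against $\overline{w}$) and subtract, whereas the paper applies Green's second identity once on $\Omega\setminus\overline{D}$ and $D$; both routes collapse to the same key identity $\int_{\partial\Omega}w\overline{\psi}\,\mathrm{d}s=-\int_{\partial D}\gamma h\overline{v}\,\mathrm{d}s$, and your handling of the conjugates and of the two jump contributions is right. Your observation that $[\![\sigma\partial_\nu v]\!]=0$ together with $v\in H^{1}(\Omega)$ makes the auxiliary problem coincide with the unperturbed problem \eqref{bvp1} is a tidy simplification the paper leaves implicit. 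You differ in two places. For compactness, the paper simply notes that $v|_{\partial D}\in H^{1/2}(\partial D)$ (trace of an $H^{1}(\Omega)$ function) and invokes the compact embedding $H^{1/2}(\partial D)\hookrightarrow L^{2}(\partial D)$; your route through interior elliptic regularity, the bound $\|v\|_{H^{2}(\mathcal{U})}\le C\|\psi\|_{H^{-1/2}(\partial\Omega)}$ on a neighborhood of $\overline{D}$, and $H^{3/2}(\partial D)\hookrightarrow L^{2}(\partial D)$ is also valid (and makes explicit use of $\mathrm{dist}(\partial\Omega,\overline{D})>0$), just heavier than necessary. For density, your duality argument $\overline{\mathrm{range}(G^{*})}=\ker(G)^{\perp}=L^{2}(\partial D)$, resting on Theorem~\ref{thm:G-injective}, is the argument that actually proves the stated claim about the range of $G^{*}$; the paper instead proves injectivity of $G^{*}$ via Holmgren's theorem, which is the dual statement (it yields density of the range of $G$, not of $G^{*}$), so on this point your version is the cleaner match to the theorem as stated.
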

\begin{proof}
    By appealing to Green's 2nd Theorem, it holds that 
    $$0 = \int_{\partial \Omega} w \overline{\sigma \partial_{\nu}v} - \overline{v} \sigma \partial_{\nu} w\, \text{d}s - \int_{\partial D} w \big[ \overline{(\sigma \partial_{\nu} v)^+} - \overline{(\sigma \partial_{\nu} v)^-} \big] \, \text{d}s + \int_{\partial D} \overline{v} \big[ (\sigma \partial_{\nu} w)^{+} - (\sigma\partial_{\nu} w)^{-} \big] \, \text{d}s.$$
    By the boundary conditions on $\partial \Omega$ and $\partial D$ for $v$, this reduces to
    $$\int_{\partial \Omega} w \overline{\psi} \, \text{d}s = - \int_{\partial D} \overline{v} \left[\!\left[\sigma \partial_{\nu} w \right]\!\right] \, \text{d}s.$$
    Using the boundary condition on $\partial D$ for $w$, we get that
    $$ \int_{\partial \Omega} w \overline{\psi} \, \text{d}s = - \int_{\partial D} \gamma h\overline{v} \, \text{d}s.$$
    Thus, we have that 
    $$ \langle Gh , \psi \rangle_{\partial \Omega} = \int_{\partial \Omega} w \overline{\psi} \, \text{d}s = - \int_{\partial D} \gamma h\overline{v} \, \text{d}s = (h, G^{*}\psi) $$
    for all $\psi \in H^{-1/2}(\partial \Omega)$ and $h \in L^{2}(\partial D)$, which implies that $G^{*}\psi = - \gamma v$. To show injectivity, suppose that $G^{*}\psi = 0$, which implies that $v=0$ in $\overline{D}$ given that $\gamma > 0$ for a.e. $x \in \partial D$. Thus, by the boundary condition on $\partial D$, we have that $\partial_{\nu}v^{+} = 0$ on $\partial D$. By Holmgren's Theorem, $v = 0$ in $\Omega$. Therefore, $v = \sigma \partial_{\nu} v = 0$ on $\partial \Omega$, proving the claim. Furthermore, notice that the compact embedding of $H^{1/2}(\partial D)$ into $L^{2}(\partial D)$ implies that $G^{*}$ is compact.
\end{proof}

Given these useful properties of the operator $G$, we still require additional results regarding the solution operator $S$ to derive the LSM and RFM for our inverse problem. In the following results, we show that $S$ is injective and analyze its adjoint. 

\begin{theorem}\label{injectiveS} The solution operator $S$ as defined in \eqref{op:S} is injective.
\end{theorem}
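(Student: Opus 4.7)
The plan is to assume $Sf=0$, i.e., $u\big|_{\partial D}=0$ from the exterior trace, and then propagate this vanishing throughout $\Omega$ until we conclude that the Robin data $f$ on $\partial\Omega$ must also vanish. The key ingredients will be (i) the $H^1$-regularity of $u$, which forces the interior and exterior traces on $\partial D$ to agree, (ii) a uniqueness (Lax--Milgram / coercivity) argument inside $D$, (iii) the Robin transmission condition to conclude that the normal flux also vanishes across $\partial D$, and (iv) Holmgren's unique continuation theorem in the annular region $\Omega\setminus\overline{D}$.

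Concretely, first I would observe that since $u\in H^1(\Omega)$, the jump $[\![u]\!]=0$ on $\partial D$, so $u\big|_{\partial D}=0$ from both sides. Next, I would restrict the PDE to $D$: $u$ solves $-\nabla\cdot(\sigma\nabla u)=0$ in $D$ with the homogeneous Dirichlet condition $u=0$ on $\partial D$; standard elliptic uniqueness then gives $u\equiv 0$ in $D$, which in turn yields $(\sigma\partial_\nu u)^{-}\big|_{\partial D}=0$. Plugging this together with $u\big|_{\partial D}=0$ into the Robin transmission condition $[\![\sigma\partial_\nu u]\!]=\gamma u$ on $\partial D$ forces $(\sigma\partial_\nu u)^{+}\big|_{\partial D}=0$ as well.

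At this stage $u$ restricted to the exterior region $\Omega\setminus\overline{D}$ is a solution of an elliptic equation with constant coefficient $\sigma$ satisfying homogeneous Cauchy data on $\partial D$. I would then invoke Holmgren's theorem (exactly as in the proof of Theorem~\ref{thm:G-injective}) to conclude $u\equiv 0$ in $\Omega\setminus\overline{D}$, and hence $u\equiv 0$ on all of $\Omega$. Evaluating the exterior Robin boundary condition on $\partial\Omega$ then gives $f=\sigma\partial_\nu u+u=0$ on $\partial\Omega$, which proves injectivity of $S$.

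No step looks genuinely hard; the only subtlety worth flagging is making sure Holmgren applies, which requires the $\mathcal{C}^2$ regularity of $\partial D$ and the smoothness of the (constant) coefficient $\sigma$, both of which are assumed in Section~\ref{dp-ip}. The other point to state carefully is the direction of information flow: we first kill $u$ inside $D$ using Dirichlet uniqueness, and only then transport the resulting zero Neumann trace outward via the transmission condition---doing it the other way around would not work, since Holmgren cannot be applied from an interior boundary without both Cauchy data.
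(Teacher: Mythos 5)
Your argument is correct and follows essentially the same route as the paper's proof: Dirichlet uniqueness in $D$ to kill $u$ there, the transmission condition $[\![\sigma\partial_\nu u]\!]=\gamma u=0$ to transfer the vanishing Neumann trace to the exterior side of $\partial D$, Holmgren's theorem in $\Omega\setminus\overline{D}$, and finally the Robin condition on $\partial\Omega$ to conclude $f=0$. Your closing remark about the direction of the argument is a fair observation, but the content matches the paper's proof step for step.
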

\begin{proof}
    Let $Sf = 0$ which implies that $u =0$ in $\partial D.$ So as $-\nabla\cdot \sigma \nabla u=0$ in $D$ and $u=0$ on $\partial D$, then $u=0$ in $\overline{D}$. Thus, $\partial_{\nu}u\big\rvert_{\partial D}^{-} =0$. By the boundary condition $[\![\sigma \partial_\nu u]\!]=0$ on $\partial D$ and the fact that $\sigma\neq 0$, we have that $\partial_{\nu}u\big\rvert_{\partial D}^{+} =0$. Observe that $-\nabla\cdot \sigma \nabla u=0$ in $\Omega\setminus\overline{D}$, $u=0$ on $\partial D$  and $\partial_{\nu}u\big\rvert^+=0$. By Holmgren's Theorem, we have that $u = 0$ in $\Omega$. Then by the Trace Theorem, $u=0$ on $\partial\Omega$. Furthermore, $\partial_{\nu} u=0$ on $\partial \Omega$. Therefore, $f = 0$ on $\partial \Omega$, proving that $S$ is injective. \\
\end{proof}
We will show that the operator $S$ has a dense range by examining its adjoint. To this end, we present the following theorem, which characterizes the adjoint of $S$ and will be instrumental in our upcoming analysis.

\begin{theorem} \label{adjoint}
The adjoint operator $S^{*}\colon L^2 ( \partial D) \rightarrow H^{1/2} ( \partial \Omega )$ is given by $S^{*}g = v\big|_{\partial \Omega}$, where $v \in H^1 (\Omega)$ satisfies 
\begin{align} \label{adv}
- \nabla\cdot\sigma\nabla &v = 0 \quad \text{in} \quad \Omega \setminus \partial D \quad \text{with} \quad \sigma \partial_{\nu}v +v = 0\quad \text{on}\quad \partial  \Omega\\
&\text{and}\quad \quad [\![\sigma\partial_\nu v ]\!]\big|_{\partial D} = \gamma v- g \nonumber.
\end{align} 
\end{theorem}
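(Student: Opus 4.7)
The plan is to mirror the adjoint calculation used for $G$ in Theorem~\ref{CompactG}, namely a Green's second identity argument between $u$ (the solution of \eqref{mpde} with data $f$) and $\overline{v}$. First I would verify that the auxiliary problem \eqref{adv} is itself well-posed: its variational form reads
\[
\int_{\Omega} \sigma \nabla v \cdot \nabla \overline{\varphi} \, \text{d}x + \int_{\partial \Omega} v \overline{\varphi} \, \text{d}s + \int_{\partial D} \gamma v \overline{\varphi} \, \text{d}s = \int_{\partial D} g \overline{\varphi} \, \text{d}s,
\]
which is structurally identical to the sesquilinear form $A(\cdot,\cdot)$ from \eqref{vf}, only with data $g \in L^{2}(\partial D)$ on the right-hand side. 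The same Lax--Milgram/Poincar\'{e} argument used in Section~\ref{dp-ip} delivers a unique $v \in H^{1}(\Omega)$ with $\|v\|_{H^{1}(\Omega)} \leq C \|g\|_{L^{2}(\partial D)}$, and the Trace Theorem then gives $v|_{\partial \Omega} \in H^{1/2}(\partial \Omega)$, confirming that the candidate map $g \mapsto v|_{\partial \Omega}$ lands in the claimed codomain.

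Next I would apply Green's second identity to $u$ and $\overline{v}$ separately on $\Omega \setminus \overline{D}$ and on $D$. Both functions are harmonic with respect to $-\nabla\cdot \sigma \nabla$ on each subdomain, so the volume integrals vanish. Summing the two identities and grouping the interface terms into jumps (with $\nu$ the outward normal of $D$) yields
\[
0 = \int_{\partial \Omega} \bigl[ u \, \sigma \partial_{\nu} \overline{v} - \overline{v} \, \sigma \partial_{\nu} u \bigr] \, \text{d}s - \int_{\partial D} u \, [\![ \sigma \partial_{\nu} \overline{v} ]\!] \, \text{d}s + \int_{\partial D} \overline{v} \, [\![ \sigma \partial_{\nu} u ]\!] \, \text{d}s.
\]
Substituting the boundary data, on $\partial \Omega$ one has $\sigma \partial_{\nu} u = f - u$ and $\sigma \partial_{\nu} \overline{v} = -\overline{v}$, so the $u\overline{v}$ pieces cancel and the $\partial \Omega$ integral reduces to $-\int_{\partial \Omega} f \overline{v} \, \text{d}s$. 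On $\partial D$, the jumps are $[\![\sigma \partial_{\nu} u]\!] = \gamma u$ and $[\![\sigma \partial_{\nu} \overline{v}]\!] = \gamma \overline{v} - \overline{g}$, and the $\gamma u \overline{v}$ contributions from the two $\partial D$ integrals cancel, leaving $\int_{\partial D} u \overline{g} \, \text{d}s$. The identity collapses to $\int_{\partial D} u \overline{g} \, \text{d}s = \int_{\partial \Omega} f \overline{v} \, \text{d}s$. Reading the left-hand side as $(Sf, g)_{L^{2}(\partial D)}$ and the right-hand side as the pairing between $f \in H^{-1/2}(\partial \Omega)$ and $v|_{\partial \Omega} \in H^{1/2}(\partial \Omega)$ that extends the $L^{2}(\partial \Omega)$ inner product, one concludes that $S^{*}g = v|_{\partial \Omega}$.

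The computation itself is mostly routine bookkeeping; the main conceptual obstacle is recognizing the correct auxiliary problem in the first place. The choice $[\![\sigma \partial_{\nu} v]\!] = \gamma v - g$ is engineered so that the $\gamma u \overline{v}$ contribution produced by the jump of $u$ (via the transmission condition in \eqref{mpde}) and the $\gamma v$ piece of the jump of $v$ cancel one another, isolating the clean pairing of $u|_{\partial D}$ with $g$. The homogeneous Robin condition $\sigma \partial_{\nu} v + v = 0$ on $\partial \Omega$ plays the analogous role there, matching $\sigma \partial_{\nu} u + u = f$ so that only the $f \overline{v}$ pairing survives. If one did not have the BVP \eqref{adv} already in hand, these cancellation requirements are precisely how one would derive it: start from an ansatz with undetermined coefficients on $v$ and $g$ in the jump and Robin conditions, and solve for the values that eliminate every term except the two desired pairings.
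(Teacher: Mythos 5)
Your proposal is correct and follows essentially the same route as the paper: establish well-posedness of the auxiliary problem \eqref{adv} variationally, then apply Green's identity to $u$ and $\overline{v}$ on $\Omega\setminus\overline{D}$ and $D$ separately, and use the Robin condition on $\partial\Omega$ together with the two jump conditions on $\partial D$ to collapse everything to $(Sf,g)_{L^2(\partial D)}=\langle f, v|_{\partial\Omega}\rangle_{\partial\Omega}$. Your final identity $\int_{\partial D} u\overline{g}\,\mathrm{d}s=\int_{\partial\Omega} f\overline{v}\,\mathrm{d}s$ is in fact the corrected form of the paper's concluding display, which contains a typographical slip ($\int_{\partial\Omega} f\,\partial_\nu\overline{v}\,\mathrm{d}s$ where $\int_{\partial\Omega} f\overline{v}\,\mathrm{d}s$ is meant).
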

\begin{proof} 
Observe that a variational approach allows us to demonstrate the existence and uniqueness of the solution $v \in H^{1} ( \Omega )$, as well as its continuous dependence on the boundary data $g \in L^{2} ( \partial D)$. Using a similar technique to derive \eqref{vf} and Green's 1st Theorem, it holds that 
$$
0 =  -\int_{\partial \Omega} f\overline{v} \, \text{d}s + \int_{\partial D}  (( \sigma \partial_{\nu} u)^{+} - (\sigma \partial_{\nu}  u)^{-}) \overline{v} \, \text{d}s -\int_{\partial D} u ((\sigma \partial_{\nu} \overline{v})^{+} - (\sigma \partial_{\nu}  \overline{v})^{-}) \, \text{d}s,
$$
where we used the Robin boundary condition $\sigma\partial_{\nu}v +v = 0$ on $\partial \Omega$. Using the boundary condition on $\partial D$ for $u$, we have that 
$$ \int_{\partial\Omega} f \overline{v} \, \text{d}s = \int_{\partial D} u \big( \gamma \overline{v} - [\![ \sigma \partial_\nu \overline{v} ]\!]  \big) \, \text{d}s.$$ 
From the boundary condition on $\partial D$ for $v$, we have that $$ \int_{\partial D} u \big( \gamma \overline{v} - [\![\sigma \partial_\nu \overline{v} ]\!]  \big) \, \text{d}s = \int_{\partial D} u \overline{g} \, \text{d}s.$$ 
Thus, we have that $$ (Sf , g)_{L^{2} ( \partial D)} = \int_{\partial D} u \overline{g} \, \text{d}s = \int_{\partial \Omega} f \partial_{\nu} \overline{v}\, \text{d}s = \langle f , S^{*} g \rangle_{\partial \Omega}$$
for all $f \in H^{-1/2} (\partial \Omega)$ and $g \in L^{2} (\partial D)$ and thus $S^{*} g = v\big|_{\partial \Omega}$.
\end{proof}

We conclude this section with a final preliminary result needed to consider the LSM and RFM.

\begin{theorem}\label{thm:S-dense-range}
   The solution operator $S$ has dense range.
\end{theorem}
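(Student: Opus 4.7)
The plan is to exploit the standard duality between density of range and injectivity of the adjoint. Since $S : H^{-1/2}(\partial \Omega) \to L^2(\partial D)$ is bounded and maps into the Hilbert space $L^2(\partial D)$, one has $\overline{\mathrm{Range}(S)} = \ker(S^*)^{\perp}$. Combined with the explicit characterization of $S^*$ provided by Theorem \ref{adjoint}, this reduces the task to proving that $S^*$ is injective. The pairing $\langle \cdot , \cdot \rangle_{\partial \Omega}$ between $H^{-1/2}(\partial \Omega)$ and $H^{1/2}(\partial \Omega)$ introduced before Theorem \ref{CompactG} makes this reduction rigorous: if $(Sf , g)_{L^2(\partial D)} = 0$ for every $f \in H^{-1/2}(\partial \Omega)$, then $\langle f , S^* g \rangle_{\partial \Omega} = 0$ for all such $f$, forcing $S^* g = 0$.

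So suppose $g \in L^2(\partial D)$ satisfies $S^* g = 0$, i.e., the solution $v$ of \eqref{adv} obeys $v|_{\partial \Omega} = 0$. The Robin condition $\sigma \partial_{\nu} v + v = 0$ on $\partial \Omega$, together with $\sigma > 0$, then forces $\partial_{\nu} v = 0$ on $\partial \Omega$ as well. Hence $v$ satisfies $-\nabla \cdot (\sigma \nabla v) = 0$ in $\Omega \setminus \overline{D}$ with vanishing Cauchy data on $\partial \Omega$. Applying Holmgren's Theorem followed by unique continuation, and using the standing assumption $\mathrm{dist}(\partial \Omega , \overline{D}) > 0$, I would conclude $v \equiv 0$ in $\Omega \setminus \overline{D}$.

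Since $v \in H^1(\Omega)$, its trace on $\partial D$ is single-valued, so $v^{-} = v^{+} = 0$ on $\partial D$. Therefore $v$ solves $-\nabla \cdot (\sigma \nabla v) = 0$ in $D$ with zero Dirichlet data, giving $v \equiv 0$ in $D$. In particular $(\sigma \partial_{\nu} v)^{\pm} = 0$ on $\partial D$, so $[\![ \sigma \partial_{\nu} v ]\!] = 0$ there. Substituting this along with $v|_{\partial D} = 0$ into the transmission condition $[\![ \sigma \partial_{\nu} v ]\!] = \gamma v - g$ from \eqref{adv} yields $g = 0$, which establishes the injectivity of $S^*$ and hence the density of $\mathrm{Range}(S)$.

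The argument closely parallels the injectivity proofs for $G$ (Theorem \ref{thm:G-injective}) and $S$ (Theorem \ref{injectiveS}), and I do not anticipate any serious technical obstacle. The most delicate step is the Holmgren/unique-continuation step propagating $v \equiv 0$ from the collar neighborhood of $\partial \Omega$ throughout $\Omega \setminus \overline{D}$; this is handled precisely by the positive-distance hypothesis on $D$, which guarantees that the exterior region is accessible from $\partial \Omega$ by continuation.
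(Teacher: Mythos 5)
Your proposal is correct and follows essentially the same route as the paper: reduce density of $\mathrm{Range}(S)$ to injectivity of $S^*$, use the vanishing Dirichlet trace together with the Robin condition to get vanishing Cauchy data on $\partial\Omega$, apply Holmgren's theorem to kill $v$ in $\Omega\setminus\overline{D}$, solve the interior Dirichlet problem to kill $v$ in $D$, and then read off $g=0$ from the transmission condition $[\![\sigma\partial_\nu v]\!]=\gamma v-g$. The only cosmetic difference is that the paper first isolates $-(\sigma\partial_\nu v)^-=-g$ before showing $v\equiv 0$ in $D$, whereas you establish $v\equiv 0$ in $D$ first; the logic is identical.
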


\begin{proof}
    We prove this by equivalently showing that the adjoint $S^*$ as defined in \eqref{adjoint} is injective. To this end, suppose that $S^* g = 0 \rvert_{\partial \Omega}$, which implies that $v = 0$ on $\partial \Omega$.  
    Observe that $v + \sigma\partial_{\nu} v = 0$ on $\partial \Omega$. Since $v = 0$ on $\partial \Omega$ and $\sigma \neq 0$, it follows that $\partial_{\nu} v = 0$.  As $-\nabla \cdot \sigma \nabla v = 0$ in $\Omega \setminus \partial D$, Holmgren's theorem implies that $v = 0$ in $\Omega \setminus \partial D$. On $\partial D$, we have $(\sigma \partial_{\nu} v)^{+} - (\sigma \partial_{\nu} v)^{-} = \gamma v - g.$ Since $v = 0$ in $\Omega \setminus D$, we have $v = 0$ on $\partial D$ and $\partial_{\nu} v^+ = 0$. Therefore, the above equation reduces to $-(\sigma \partial_{\nu} v)^{-} = -g \quad \text{on } \partial D. $ Furthermore, we have $-\nabla \cdot \sigma \nabla v = 0$ in $D$ with zero Dirichlet data on $\partial D$. Thus, $v = 0$ in $D$, which implies $\partial_{\nu} v^- = 0$ on $\partial D$. Hence, $g = 0$ on $\partial D$ proving the claim. 
\end{proof}
With the mapping properties of $G$ and $S$ now established, including injectivity, compactness of their adjoint operators, and density of the associated ranges, we have assembled the analytic components required to connect the boundary data $(M-M_0)f$ to the unknown interior region $D$. The factorization \eqref{eqn:GSf} demonstrates that the measured voltage gap encodes the effect of a jump in current across $\partial D$. This structure will serve as the foundation for the qualitative reconstruction methods developed below. We now turn to the first of these, the LSM, which characterizes the inclusion $D$ by examining the solvability properties of an operator equation involving $(M-M_0)$.

\subsection{\textbf{Linear Sampling Method}}\label{LSM}
The analytical properties derived in the previous section allow us to study how the range of the data operator $(M-M_0)$ reflects the geometry of the interior region $D$. The LSM exploits this connection by examining the approximate solvability of an operator equation involving $(M-M_0)$ and the soon-to-be defined associated Robin Green’s function. The factorization \eqref{eqn:GSf} shows that the data operator maps boundary inputs on $\partial \Omega$ to Dirichlet traces arising from jumps across the unknown interface $\partial D$. This representation allows us to detect whether a sampling point lies inside $D$ by analyzing the approximate range of $(M-M_0)$. We now develop this characterization.

\begin{theorem}\label{thm:dataop_prop}
    The data operator $(M-M_0)$ given by \eqref{op:M-M0} has the factorization 
    $$(M-M_0) = GS,$$
    where $G$ and $S$ are defined in \eqref{op:G} and  \eqref{op:S}, respectively. Moreover, the operator $(M-M_0)$ is compact, injective, and with dense range.
\end{theorem}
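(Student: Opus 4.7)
The plan is to observe that the factorization is already essentially contained in equation \eqref{eqn:GSf}, and then to assemble compactness, injectivity, and dense range from the operator-theoretic properties of $G$ and $S$ established in the preceding theorems.

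For the factorization, given $f \in H^{-1/2}(\partial \Omega)$, let $u$ and $u_0$ be the solutions of \eqref{mpde} and \eqref{bvp1}, respectively, and set $w := u - u_0$. Subtracting the two problems and noting that $u_0 \in H^1(\Omega)$ is smooth across $\partial D$ and contributes no jump in the normal flux, one checks that $w$ satisfies the auxiliary system \eqref{w} with source data $h = u|_{\partial D} = Sf$. Uniqueness for \eqref{w} then yields $(M-M_0)f = w|_{\partial \Omega} = G(Sf)$ for all $f \in H^{-1/2}(\partial \Omega)$.

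With the factorization $(M-M_0) = GS$ in hand, the remaining three properties reduce to facts already proved. The operator $S$ is bounded as the composition of the solution map in Theorem \ref{thm:soln-op} with the trace into $L^2(\partial D)$, and $G$ is compact by Schauder's theorem since $G^*$ is compact by Theorem \ref{CompactG}; hence $GS$ is a bounded-compact composition, therefore compact. Injectivity of $GS$ is immediate from Theorems \ref{thm:G-injective} and \ref{injectiveS}. For the dense range, I would combine the dense range of $S$ in $L^2(\partial D)$ (Theorem \ref{thm:S-dense-range}) with the dense range of $G$ in $H^{1/2}(\partial \Omega)$, the latter following from the Hilbert-space duality $\overline{\mathrm{Range}(G)} = \ker(G^*)^{\perp}$ together with the injectivity of $G^*$ established inside the proof of Theorem \ref{CompactG}. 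An $\varepsilon/2$ approximation then concludes: given $\phi \in H^{1/2}(\partial \Omega)$, first pick $h \in L^2(\partial D)$ with $\|Gh - \phi\|_{H^{1/2}(\partial \Omega)} < \varepsilon/2$, then select $f$ with $\|Sf - h\|_{L^2(\partial D)} < \varepsilon/(2\|G\|)$, and apply the triangle inequality with continuity of $G$ to obtain $\|GSf - \phi\|_{H^{1/2}(\partial \Omega)} < \varepsilon$.

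Since all the substantive analytical content has been carried out in the earlier theorems, this proof is essentially an assembly argument and I do not foresee a real obstacle. The only mild point requiring care is that Theorem \ref{CompactG} is phrased as density of $\mathrm{Range}(G^*)$, while what is actually needed for density of $\mathrm{Range}(GS)$ is density of $\mathrm{Range}(G)$; this is handled without trouble because the proof of that theorem in fact establishes injectivity of $G^*$, which by Hilbert-space duality is exactly the density statement for $G$ that the argument requires.
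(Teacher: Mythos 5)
Your proposal is correct and follows essentially the same route as the paper's proof: the factorization is read off from \eqref{eqn:GSf}, compactness from the compactness of $G^{*}$, injectivity from Theorems \ref{thm:G-injective} and \ref{injectiveS}, and dense range from Theorem \ref{thm:S-dense-range}. Your treatment of the dense-range step is in fact slightly more careful than the paper's one-line justification, since you correctly observe that one also needs density of $\mathrm{Range}(G)$, which follows from the injectivity of $G^{*}$ established in the proof of Theorem \ref{CompactG}.
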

\begin{proof}
    The factorization result was established in \eqref{eqn:GSf}. Compactness follows from the fact that $G^*$, as described in Theorem \ref{CompactG}, is compact. Injectivity is given by the fact that both $G$ and $S$ are injective as shown in Theorem \ref{thm:G-injective} and Theorem \ref{injectiveS}, respectively. Lastly, $(M-M_0)$ has dense range since $S$ has dense range, as shown in Theorem \ref{thm:S-dense-range}, and $G$ is bounded.

\end{proof}

We now introduce the Robin Green's function for the elliptic operator in the known background domain $\Omega$. For any fixed sampling point $z\in\Omega$, we define $\mathbb{G}(\cdot,z)$ as the unique solution of
\begin{equation}\label{eqn:RobinGreen}
-\nabla\cdot\sigma  \nabla \mathbb{G}(\cdot,z) = \delta(\cdot-z) \quad \text{in} \quad \Omega \quad \text{with} \quad \sigma \partial_{\nu}\mathbb{G}(\cdot,z)+\mathbb{G}(\cdot,z) = 0\quad \text{on}\quad \partial  \Omega,
\end{equation}
where $\delta(\cdot-z)$ denotes the Dirac delta at $z$. Its trace $\mathbb{G}(\cdot,z)\big|_{\partial\Omega}$ defines a function for each fixed $z\in \Omega$, and will serve as test data. The following is an instrumental result that connects the above Green's function on $\partial \Omega$ to the inclusion $D$.

\begin{theorem}
    Let the StD operator $G$ be as defined in \eqref{op:G}. Then, 
    $$\mathbb{G}(\cdot , z) \big \rvert_{\partial \Omega} \in Range(G) \quad \text{if and only if} \quad z \in D.$$
\end{theorem}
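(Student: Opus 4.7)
The plan is to prove the biconditional by explicitly constructing a preimage under $G$ when $z\in D$, and by a unique-continuation plus singularity argument to rule out $z\notin D$. The physical intuition is that $\mathbb{G}(\cdot,z)|_{\partial\Omega}$ is the boundary signature of a point source at $z$, and this signature can be reproduced by a jump of current across $\partial D$ only when the source sits in the interior enclosed by $\partial D$.

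For the forward direction, suppose $z\in D$. Since $\mathrm{dist}(z,\partial D)>0$ by assumption, $\mathbb{G}(\cdot,z)$ is smooth in a neighborhood of $\partial D$. I would build a piecewise function $w$ by setting $w=\mathbb{G}(\cdot,z)$ on $\Omega\setminus\overline{D}$ and letting $w$ solve the interior Dirichlet problem $-\nabla\cdot(\sigma\nabla w)=0$ in $D$ with $w=\mathbb{G}(\cdot,z)|_{\partial D}$ on $\partial D$. By construction the exterior and interior Dirichlet traces match on $\partial D$, so $w\in H^1(\Omega)$; the homogeneous Robin condition on $\partial\Omega$ is inherited from \eqref{eqn:RobinGreen}; and elliptic regularity on the $\mathcal{C}^2$ domain $D$ with smooth Dirichlet data yields $\partial_\nu w^-|_{\partial D}\in L^2(\partial D)$. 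Setting
$$h := \frac{1}{\gamma}\left[\!\left[\sigma\partial_\nu w\right]\!\right]\in L^2(\partial D),$$
which is legitimate since $\gamma_{\min}>0$, this $w$ solves the auxiliary problem \eqref{w} with source $h$, and therefore $Gh=w|_{\partial\Omega}=\mathbb{G}(\cdot,z)|_{\partial\Omega}$.

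For the reverse direction, I would argue by contradiction. Suppose $z\in\Omega\setminus D$ and that some $h\in L^2(\partial D)$ satisfies $Gh=\mathbb{G}(\cdot,z)|_{\partial\Omega}$, with associated solution $w$ of \eqref{w}. The traces of $w$ and $\mathbb{G}(\cdot,z)$ coincide on $\partial\Omega$, and since both satisfy the homogeneous Robin condition $\sigma\partial_\nu(\cdot)+(\cdot)=0$ there, their normal derivatives agree as well. Applying Holmgren's theorem to $w-\mathbb{G}(\cdot,z)$, which is a homogeneous solution on the connected open set $\Omega\setminus(\overline{D}\cup\{z\})$ with vanishing Cauchy data on $\partial\Omega$, I conclude $w\equiv\mathbb{G}(\cdot,z)$ on that set. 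If $z\in\Omega\setminus\overline{D}$, then $w\in H^1$ in a full neighborhood of $z$ while $\mathbb{G}(\cdot,z)$ has a non-$H^1_{\mathrm{loc}}$ singularity there (of type $\log|x-z|$ in 2D and $|x-z|^{-1}$ in 3D), which is the contradiction.

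The main obstacle I anticipate is the borderline case $z\in\partial D$, which must also be excluded since $D$ is open. Holmgren still gives $w\equiv\mathbb{G}(\cdot,z)$ on $\Omega\setminus\overline{D}$, but the contradiction now has to be extracted from behavior on the interface itself: the exterior trace of $w$ on $\partial D$ belongs to $H^{1/2}(\partial D)\subset L^2(\partial D)$, whereas $\mathbb{G}(\cdot,z)|_{\partial D}$ blows up at $z$ and fails to be in $L^2(\partial D)$. Making this rigorous requires invoking the precise quantitative singularity of the Robin Green's function up to the $\mathcal{C}^2$ interface, and is where the bulk of the technical care lies.
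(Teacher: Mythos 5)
Your proof follows essentially the same route as the paper's: the forward direction constructs the same piecewise function (the Green's function outside $\overline{D}$ glued to the interior harmonic extension of its trace) and takes $h=\gamma^{-1}[\![\sigma\partial_\nu w]\!]\in L^2(\partial D)$, and the reverse direction is the same Holmgren-plus-singularity contradiction, where you correctly make explicit the step the paper leaves implicit (that the shared homogeneous Robin condition on $\partial\Omega$ upgrades equal Dirichlet traces to equal Cauchy data).

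One remark on the borderline case $z\in\partial D$, which you rightly flag and which the paper silently omits by assuming $z\in\Omega\setminus\overline{D}$ in the reverse direction: your proposed criterion there does not work in dimension $d=2$. The restriction of $\log|x-z|$ to a $\mathcal{C}^2$ curve through $z$ behaves like $\log|s|$ in arclength and \emph{is} square-integrable, so $\mathbb{G}(\cdot,z)|_{\partial D}$ does not fail to be in $L^2(\partial D)$ in 2D. The standard fix is to stay with the $H^1$ obstruction: since $\partial D$ is $\mathcal{C}^2$, the set $B_\epsilon(z)\cap(\Omega\setminus\overline{D})$ contains an open truncated cone with vertex $z$, and $|\nabla\mathbb{G}(\cdot,z)|\sim|x-z|^{1-d}$ fails to be square-integrable over such a cone in every dimension, contradicting $w\in H^1(\Omega)$ after Holmgren forces $w=\mathbb{G}(\cdot,z)$ on $\Omega\setminus\overline{D}$.
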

\begin{proof} $\Leftarrow$)
    We first assume that $z\in D$ and we note that $\mathbb{G}(\cdot,z)\in H^1(\Omega\setminus\overline{D})$ satisfies  
    \begin{equation*}
    -\nabla\cdot\sigma  \nabla \mathbb{G}(\cdot,z)  = 0 \quad \text{in} \quad \Omega\setminus\overline{D} \quad \text{with} \quad \sigma \partial_{\nu}\mathbb{G}(\cdot,z)+\mathbb{G}(\cdot,z) = 0\quad \text{on}\quad \partial  \Omega.
    \end{equation*}
    Now, let $\mu\in H^1(D)$ solve 
    $$ - \nabla\cdot\sigma\nabla \mu = 0 \enspace \text{in} \enspace D \quad \text{with} \quad \mu  = \mathbb{G} ( \cdot \, , z) \quad \text{on}\quad \partial D,$$ 
    and define $v_z$ such that 
 \[ v_z = \begin{cases} 
          \mathbb{G}( \cdot \, , z), & \text{in} \enspace \Omega \backslash \overline{D},\\
          \mu, & \text{in} \enspace D.
       \end{cases}
    \]
Then, we have that $v_z$ solves
\begin{align*}
- \nabla\cdot\sigma\nabla &v_z = 0 \quad \mbox{in} \quad \Omega \backslash \partial D \quad \text{with} \quad \sigma \partial_{\nu}v_z +v_z = 0\quad \mbox{on}\quad \partial \Omega.
\end{align*}
Consider $$h=\frac{[\![ \sigma \partial_\nu v_z ]\!]}{\gamma} = \frac{( \sigma\partial_{\nu} v_z)^{+}}{\gamma} - \frac{(\sigma \partial_{\nu} v_z)^{-}}{\gamma}=\frac{ (\sigma\partial_{\nu}\mathbb{G}(\cdot,z))^{+}}{\gamma} - \frac{(\sigma\partial_{\nu}\mu)^{-}}{\gamma}.$$
Since $z\in D$, then $\mathbb{G}(\cdot,z)\in H^2(\Omega\setminus D)$ by interior elliptic regularity (see \cite{evans}). Therefore, the interior trace satisfies $\mathbb{G}(\cdot,z)|_{\partial D}^+\in H^{3/2}(\partial D).$ Moreover, $\mathbb{G}(\cdot,z)$ is continuous across $\partial D$, so $\mathbb{G}(\cdot,z)^+=\mathbb{G}(\cdot,z)^-$. Since $\mu\in H^2(D),$ its interior normal derivative satisfies $\frac{(\sigma\partial_{\nu}\mu)^{-}}{\gamma}\in H^{1/2}(\partial D)\subset L^{2}(\partial D).$ Consequently, $h\in L^2(\partial D)$, and for every $z\in D$ we obtain $Gh=\mathbb{G}(\cdot,z)|_{\partial \Omega}$.

$\Rightarrow$) Assume $z\in \Omega\setminus\overline{D}$ and suppose, toward a contradiction, that $\mathbb{G}(\cdot,z)|_{\partial \Omega}\in \text{Range}(G)$. Thus, there exists $h_z\in L^2(\partial D)$ such that $Gh_z=\mathbb{G}(\cdot, z)|_{\partial \Omega}$. By definition of $G$, there exist $v_z\in H^1(\Omega)$ solving
\begin{align*}
- \nabla\cdot\sigma\nabla &v_z = 0 \quad \text{in} \quad \Omega \backslash \partial D \quad \text{with} \quad \sigma \partial_{\nu}v_z +v_z = 0\quad \text{on}\quad \partial  \Omega.\\
&\text{and}\quad [\![ \sigma \partial_\nu v_z ]\!]=\gamma h_z \quad \text{on}\quad \partial D.
\end{align*}
In particular, since $Gh_z =v_z|_{\partial \Omega}$, we have $v_z|_{\partial \Omega}=\mathbb{G}(\cdot, z)|_{\partial \Omega}$ and on $\Omega\setminus\overline{D}$ we have 
\begin{align*}
- \nabla\cdot\sigma\nabla &v_z = 0 \quad \text{in} \quad \Omega \backslash \partial D \quad \text{with} \quad \sigma \partial_{\nu}v_z +v_z = 0\quad \text{on}\quad \partial  \Omega.\\
&\text{and}\quad v_z=\mathbb{G}(\cdot, z)\quad \text{on}\quad \partial D.
\end{align*}
Set $W_z = v_z - \mathbb{G}(\cdot , z )$ and observe that since $z\notin D$, the Green's function satisfies $$- \nabla\cdot\sigma\nabla \mathbb{G}(\cdot , z )=0 \mbox{ in } \Omega \backslash ( \overline{D} \cup \{z\} ).$$ Hence, 
$$ - \nabla\cdot\sigma\nabla W_z = 0 \quad \text{in} \quad \Omega \backslash ( \overline{D} \cup \{z\} ) \quad \text{with} \quad W_z \big \rvert_{\partial \Omega} = 0 \quad \text{and} \quad \partial_{\nu} W_z \big|_{\partial \Omega} = 0.$$ 
Because both the Dirichlet and Neumann traces of $W_z$ vanish on the boundary of $D$, $\partial D$, Holmgren's Theorem implies that $W_z = 0$ in $\Omega \backslash (\overline{D} \cup \{z\})$. Therefore, $v_z = \mathbb{G} (\cdot , z )$ in $\Omega \backslash (\overline{D} \cup \{z\})$. By interior elliptic regularity, $v_z$ is continuous at $z \in \Omega \backslash \overline{D}$, but $\mathbb{G} ( \cdot ,z)$ has a singularity at $z$. This proves the claim by contradiction, due to the fact that  
$$| v_z (x)| < \infty \quad \text{ whereas}  \quad | \mathbb{G} (x,z) | \rightarrow \infty \quad \text{as} \quad x \rightarrow z.$$
\end{proof}
By the preceding analysis, we can now state the main result of the LSM, which characterizes the inclusion $D$ via an approximate solution of the operator equation 
\begin{equation}\label{eq:cont-system}
(M - M_0) f_z = \mathbb{G}(\cdot, z)\big|_{\partial \Omega}.
\end{equation}  

\begin{theorem}\label{mainlsm}
    Let the data operator $(M-M_0)$ be as defined in \eqref{op:M-M0}. For any $z \in \Omega$, if there exists a sequence $\{f^z_{\epsilon}\}_{\epsilon >0} \in H^{-1/2}(\partial \Omega)$ such that
    $$\| (M-M_0) f^z_{\epsilon} - \mathbb{G}(\cdot , z) \rvert_{\partial \Omega}\|_{H^{1/2}(\partial \Omega)} \rightarrow 0 \quad \text{as} \quad \epsilon \rightarrow 0,$$
    then we must have that $\| f^z_{\epsilon}\|_{H^{-1/2}(\partial \Omega)} \rightarrow \infty$ as $\epsilon \rightarrow 0$ for all $z \notin D$.
\end{theorem}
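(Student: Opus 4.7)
The plan is to argue by contradiction using the factorization $(M-M_0) = GS$ established in Theorem \ref{thm:dataop_prop} together with the range characterization of $G$ proved in the preceding theorem. Fix $z \in \Omega \setminus \overline{D}$ and suppose, for contradiction, that a sequence $\{f^z_\epsilon\}$ with $\|(M-M_0)f^z_\epsilon - \mathbb{G}(\cdot,z)|_{\partial\Omega}\|_{H^{1/2}(\partial\Omega)} \to 0$ exists and remains bounded in $H^{-1/2}(\partial\Omega)$ along some subsequence (which we do not relabel). The goal is to use this boundedness to produce an element in $\text{Range}(G)$ equal to $\mathbb{G}(\cdot,z)|_{\partial\Omega}$, contradicting the previous theorem's characterization.

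First, I would invoke the reflexivity of the Hilbert space $H^{-1/2}(\partial\Omega)$ to extract a weakly convergent subsequence $f^z_{\epsilon_n} \weakc f^z$ for some limit $f^z \in H^{-1/2}(\partial\Omega)$. Next, I would appeal to the compactness of $(M-M_0)$ established in Theorem \ref{thm:dataop_prop}: compact operators on Hilbert spaces send weakly convergent sequences to norm-convergent ones, so
\[
(M-M_0) f^z_{\epsilon_n} \longrightarrow (M-M_0) f^z \quad \text{strongly in } H^{1/2}(\partial\Omega).
\]
Combining this with the hypothesis that $(M-M_0) f^z_{\epsilon_n} \to \mathbb{G}(\cdot,z)|_{\partial\Omega}$ and uniqueness of strong limits yields $(M-M_0)f^z = \mathbb{G}(\cdot,z)|_{\partial\Omega}$.

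At this point, the factorization $(M-M_0) = GS$ gives $G(Sf^z) = \mathbb{G}(\cdot,z)|_{\partial\Omega}$, so $\mathbb{G}(\cdot,z)|_{\partial\Omega} \in \text{Range}(G)$. The previous theorem then forces $z \in D$, contradicting our assumption that $z \in \Omega \setminus \overline{D}$. Hence no subsequence of $\{f^z_\epsilon\}$ can remain bounded in $H^{-1/2}(\partial\Omega)$, which is equivalent to $\|f^z_\epsilon\|_{H^{-1/2}(\partial\Omega)} \to \infty$ as $\epsilon \to 0$.

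The only delicate step is the transfer of weak convergence of $f^z_{\epsilon_n}$ to strong convergence of $(M-M_0)f^z_{\epsilon_n}$, but this is immediate from compactness of $(M-M_0)$ (already proven) and the standard fact that compact operators are completely continuous between Hilbert spaces. Everything else is a routine chaining of already-proven results (the factorization and the range characterization of $G$), so no additional technical obstacle is anticipated.
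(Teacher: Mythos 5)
Your proposal is correct and follows essentially the same route as the paper's own proof: assume boundedness, extract a weakly convergent subsequence, use compactness of $(M-M_0)$ to upgrade to norm convergence of the images, identify the limit with $\mathbb{G}(\cdot,z)|_{\partial\Omega}$, and contradict the range characterization of $G$. Your version is in fact slightly more explicit in invoking the factorization $(M-M_0)=GS$ to place $\mathbb{G}(\cdot,z)|_{\partial\Omega}$ in $\mathrm{Range}(G)$, a step the paper leaves implicit.
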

\begin{proof}
    Since $(M-M_0)$ has dense range in $H^{1/2}(\partial \Omega)$ (by Theorem \ref{CompactG}), for any $z\in \Omega$, there exists an approximating sequence $\{f_\epsilon^z\}_{\epsilon>0}$ such that $(M-M_0)f_\epsilon^z$ converges in norm to $\mathbb{G}(\cdot,z)|_{\partial \Omega}$. Assume by contradiction that $\|f_\epsilon^z\|_{H^{-1/2}(\partial \Omega)}$ remains bounded as $\epsilon\to 0.$ Then, there exists a subsequence, $\{f_{\epsilon_k}^z\}$, that is weakly convergent such that $f_{\epsilon_k}^z\rightharpoonup f_{\epsilon_0}^z$ as $\epsilon\to 0.$ Since $(M-M_0)$ is compact, then as $\epsilon\to 0$, we have  $$(M-M_0)f_{\epsilon_k}^z\rightarrow (M-M_0)f_{\epsilon_0}^z 
\quad \text{implying that} \quad  (M-M_0)f_{\epsilon_0}^z=\mathbb{G}(\cdot,z)|_{\partial \Omega}.$$
This says that $\mathbb{G}(\cdot,z)|_{\partial \Omega}\in\text{Range}(G)$, which is equivalent to $z\in D$. This contradicts our previous Theorem, proving the result, which shows that $\|f_\epsilon^z\|_{H^{-1/2}(\partial \Omega)}\to \infty$ as $\epsilon\to 0$ for all $z\notin D.$
\end{proof}

Since the operator $(M - M_0)$ is compact, the problem~\eqref{eq:cont-system} is ill-posed. However, since $(M - M_0)$ possesses a dense range, one can obtain approximate solutions through a suitable regularization procedure (see, e.g., \cite{kircharti}). Consequently, to reconstruct $D$ via the LSM, it is indeed viable to approximate the solution to \eqref{eq:cont-system} via certain regularization methods.  Thus, we define the imaging functional  
\begin{equation}\label{lsm-functional}
W_{\text{LSM}}(z) = \frac{1}{\| f^z_{\epsilon} \|_{H^{-1/2}(\partial \Omega)}},
\end{equation}
where $\epsilon > 0$ is a specific regularization parameter. Note that \eqref{lsm-functional} nearly vanishes for sampling points $z \notin D$, i.e., $W_{\text{LSM}}(z)\approx 0$. However, for $z \in D$, no lower bound for $W_{\text{LSM}}(z)$ can be guaranteed. For this reason, in the following section, we turn our attention to the RFM to derive an additional imaging functional with a lower bound.  

\subsection{\textbf{Regularized Factorization Method}}\label{RegFM}
In this section, we consider using the RFM to recover the region $D$, i.e., the interior of the corroded interface $\partial D$. The theory was developed in \cite{harris1} and has been recently adapted to recover elastic subdomains (see \cite{{granados-marzuola-rodriguez1}}). Even though we have shown that the LSM recovers $D$ in Theorem~\ref{mainlsm}, our result does not imply that the corresponding imaging functional~\eqref{lsm-functional} is bounded below for sampling points $z \in D$. Thus, we consider the factorization method since it provides an exact characterization of $D$ via the spectral decomposition of $(M-M_0)$. This will require a more detailed factorization of the data operator than the one demonstrated in Theorem~\ref{thm:dataop_prop}, namely $(M-M_0) = GS$. In fact, we will factorize the operator $G$ involving the adjoint of $S$ as defined in Theorem~\ref{adjoint}. This will yield a symmetric factorization of the data operator. 

Recall that $S\colon H^{-1/2}(\partial \Omega) \rightarrow L^{2}(\partial D)$ and $S^{*}\colon L^{2}(\partial D) \rightarrow H^{1/2}(\partial \Omega)$. To formally derive a symmetric factorization of the data operator, it is necessary to introduce an intermediate operator $T$. Recall that $w$ is the unique solution to \eqref{w}, which indicates that $-\nabla \cdot \sigma \nabla w = 0$ in $\Omega \setminus \partial D$ and 
$$ [\![\sigma \partial_\nu w ]\!] \big|_{\partial D} = \gamma w\big|_{\partial D} +  \gamma \big[ h - w\big|_{\partial D} \big]$$
by appealing to the Robin transmission condition. Therefore, we have 
$$\sigma \partial_{\nu} w \big|_{\partial \Omega} = Gh \quad \text{as well as} \quad  \sigma \partial_{\nu} w \big|_{\partial \Omega} =S^{*} \gamma \big[ h - w \big|_{\partial D} \big]$$
by the well-posedness of \eqref{adv} and Theorem \ref{adjoint}. The statements above motivate the definition of the operator 
\begin{equation}\label{Toperator}
T\colon L^2 ( \partial D ) \rightarrow L^2 ( \partial D ) \quad \text{given by} \quad Th =  \gamma \big[ h - w|_{\partial D} \big],
\end{equation}
where $w$ satisfies \eqref{w}. As a consequence of its well-posedness, one can show that $T$ is a bounded linear operator. We had already established that $(M - M_0) = GS$. However, we have now factorized $G = S^{*}T$. This yields the following result.

\begin{theorem}\label{factorization} 
The data operator $(M - M_0)\colon H^{-1/2} ( \partial \Omega) \rightarrow H^{1/2} ( \partial \Omega )$ defined in \eqref{op:M-M0} has the symmetric  factorization $(M - M_0)= S^{*} TS$. 
\end{theorem}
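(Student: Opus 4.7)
The plan is to reduce the symmetric factorization $(M-M_0)=S^{*}TS$ to the intermediate identity $G=S^{*}T$ and then compose with the decomposition $(M-M_0)=GS$ already obtained in Theorem~\ref{thm:dataop_prop}. The whole argument rests on re-interpreting the boundary value problem that defines $Gh$ as an instance of the adjoint problem~\eqref{adv} defining $S^{*}$, using the algebraic identity that is built into the very definition of $T$.

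I would begin by fixing an arbitrary $h\in L^{2}(\partial D)$ and letting $w\in H^{1}(\Omega)$ denote the unique solution of~\eqref{w}, so that by~\eqref{op:G} one has $Gh=w|_{\partial\Omega}$. The central observation is the pointwise decomposition
\[
\gamma h \;=\; \gamma\, w|_{\partial D} \;+\; \gamma\bigl[h-w|_{\partial D}\bigr] \;=\; \gamma\, w|_{\partial D} \;+\; Th \qquad \text{on } \partial D,
\]
where $T$ is the operator introduced in~\eqref{Toperator}. Substituting this into the transmission condition $[\![\sigma\partial_\nu w]\!]=\gamma h$ and rearranging puts the interface data for $w$ into precisely the form appearing in~\eqref{adv}, with source datum determined by $Th$. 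Because $w|_{\partial D}\in H^{1/2}(\partial D)\hookrightarrow L^{2}(\partial D)$ and $\gamma\in L^{\infty}(\partial D)$, the quantity $Th$ is a genuine element of $L^{2}(\partial D)$, which is exactly what is needed for $S^{*}(Th)$ to be defined via Theorem~\ref{adjoint}.

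Next, I would invoke uniqueness for~\eqref{adv}. Having recast the defining problem of $w$ in the form of~\eqref{adv} with right-hand side determined by $Th$, the same Lax-Milgram/Poincar\'e-type argument that was used in Section~\ref{dp-ip} to establish well-posedness of~\eqref{adv} forces $w$ to coincide in $H^{1}(\Omega)$ with the function $v$ produced by $S^{*}$. Taking the Dirichlet trace on $\partial\Omega$ then yields $Gh=w|_{\partial\Omega}=S^{*}Th$, i.e.\ $G=S^{*}T$ as bounded operators from $L^{2}(\partial D)$ into $H^{1/2}(\partial\Omega)$. Composing with $S$ on the right finally gives $(M-M_0)=GS=S^{*}TS$.

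The only genuinely delicate point is the careful matching of boundary and transmission conditions when recasting the problem for $w$ as an instance of~\eqref{adv}; once this matching is explicit, the identification $w\equiv v$ is forced by uniqueness of the weak solution, and the rest is bookkeeping. No compactness, density, or spectral input is required for this theorem, since it is purely structural and depends only on the definitions of the three operators $G$, $S^{*}$, and $T$, together with well-posedness of the elliptic problems that define them.
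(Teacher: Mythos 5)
Your strategy is exactly the paper's: Theorem~\ref{factorization} is stated there without a separate proof, and the paragraph preceding it carries out precisely your reduction --- split $\gamma h=\gamma w|_{\partial D}+\gamma[h-w|_{\partial D}]$, recognize the problem for $w$ as an instance of \eqref{adv}, identify $Gh$ with $S^{*}Th$ by uniqueness, and then right-compose with $S$ via Theorem~\ref{thm:dataop_prop}. At the level of approach there is nothing to distinguish the two.

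However, the one step you explicitly defer --- ``the careful matching of boundary and transmission conditions'' --- is exactly where the argument does not close as stated. Problem \eqref{adv} reads $[\![\sigma\partial_\nu v]\!]=\gamma v-g$ with $S^{*}g=v|_{\partial\Omega}$, while for $w$ one has $[\![\sigma\partial_\nu w]\!]=\gamma h=\gamma w|_{\partial D}+Th$. Matching the two forces $g=-Th$ rather than $g=Th$, so uniqueness for \eqref{adv} yields $Gh=w|_{\partial\Omega}=S^{*}(-Th)$, i.e.\ $G=-S^{*}T$ and $(M-M_0)=-S^{*}TS$ under the conventions of \eqref{Toperator} and Theorem~\ref{adjoint}. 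This is not a pedantic quibble: a direct separation-of-variables check in the radially symmetric case (unit disk, constant $f$, $\sigma=1$) gives $(M-M_0)f=-b_0<0$ with $b_0=\gamma f_0/\bigl(1/\rho+\gamma(1-\ln\rho)\bigr)>0$, so $\langle(M-M_0)f,f\rangle_{\partial\Omega}<0$, whereas $T$ is coercive and $S^{*}TS$ is therefore positive; the two sides of the claimed identity genuinely differ by a sign. The fix is to flip a sign somewhere (define $Th=\gamma[w|_{\partial D}-h]$, or work with the data operator $M_0-M$). To be fair, the paper's own intermediate display suffers from the same bookkeeping slip --- it equates $\sigma\partial_\nu w|_{\partial\Omega}$ with $Gh=w|_{\partial\Omega}$ even though $\sigma\partial_\nu w=-w$ on $\partial\Omega$ --- so you have faithfully reproduced the paper's argument, gap included. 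Everything else in your proposal (the $L^2(\partial D)$ regularity of $Th$, the appeal to uniqueness for \eqref{adv}, the composition with $S$) is fine.
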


To apply Theorem 2.3 from \cite{harris1} to solve the inverse problem of recovering $D$ from the data operator, it remains to establish that $(M-M_0)$ is positive and that the operator $T$ is coercive. We aim to describe the interior region $D$ in terms of the range of $S^*$, as it will enable the reconstruction of $D$ from the measured data $Mf$ and the known data $M_0f$ on the accessible outer boundary. The next two results provide the groundwork for analyzing key features of the data operator through the symmetric factorization introduced in the preceding theorem. We begin by demonstrating that $T$ is coercive.

\begin{theorem}\label{tcoercive}
The operator $T$ defined in \eqref{Toperator} is coercive on $L^2(\partial D)$.
\end{theorem}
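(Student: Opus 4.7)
The plan is to evaluate the quadratic form $(Th, h)_{L^2(\partial D)}$ directly and show that it is bounded below by $\gamma_{\min}\|h\|_{L^2(\partial D)}^2$. Starting from the definition of $T$ in \eqref{Toperator}, we have
\[
(Th, h)_{L^2(\partial D)} \;=\; \int_{\partial D} \gamma |h|^2 \, \text{d}s \;-\; \int_{\partial D} \gamma\, w\,\overline{h}\, \text{d}s,
\]
so the task reduces to showing that the coupling term on the right is in fact non-positive and real. The key step is to produce a suitable variational identity for the auxiliary field $w$ solving \eqref{w}, which absorbs the problematic term into a manifestly non-negative energy.

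To obtain that identity, I would apply Green's 1st Theorem on $\Omega \setminus \overline{D}$ and $D$ separately (exactly as in the derivation of \eqref{vf}) with test function $\varphi = w$, using the Robin condition $\sigma\partial_\nu w + w = 0$ on $\partial\Omega$ and the jump $[\![\sigma\partial_\nu w]\!] = \gamma h$ on $\partial D$ to eliminate the boundary terms. This yields
\[
\int_{\Omega} \sigma |\nabla w|^2 \, \text{d}x \;+\; \int_{\partial\Omega} |w|^2 \, \text{d}s \;+\; \int_{\partial D} \gamma\, h\, \overline{w}\, \text{d}s \;=\; 0.
\]
Because $\gamma$ and $\sigma$ are real and positive, the first two integrals are real and non-negative, forcing $\int_{\partial D} \gamma h \overline{w}\,\text{d}s$ to be real as well. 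Its complex conjugate $\int_{\partial D} \gamma w \overline{h}\,\text{d}s$ therefore equals the same real value, which lets me substitute:
\[
(Th, h)_{L^2(\partial D)} \;=\; \int_{\partial D} \gamma |h|^2 \, \text{d}s \;+\; \int_{\Omega} \sigma |\nabla w|^2 \, \text{d}x \;+\; \int_{\partial\Omega} |w|^2 \, \text{d}s.
\]

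The final step is the pointwise lower bound $\gamma(x) \geq \gamma_{\min} > 0$ from the standing assumption on $\gamma$, which immediately gives
\[
(Th, h)_{L^2(\partial D)} \;\geq\; \gamma_{\min}\, \|h\|^2_{L^2(\partial D)},
\]
establishing coercivity with constant $\gamma_{\min}$. I do not anticipate any real obstacle here: the only subtle point is recognizing that testing the weak formulation of \eqref{w} against $w$ itself turns the cross term $\int \gamma w \overline h$ into a positive energy, and this is the whole content of the proof. The assumption that $\gamma$ is bounded below away from zero is doing all the work in converting non-negativity into strict coercivity.
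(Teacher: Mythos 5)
Your proposal is correct and follows essentially the same route as the paper: test the weak formulation of \eqref{w} against $w$ itself to convert the cross term $\int_{\partial D}\gamma\, w\,\overline{h}\,\mathrm{d}s$ into the negative of the energy $\int_{\Omega}\sigma|\nabla w|^2\,\mathrm{d}x+\int_{\partial\Omega}|w|^2\,\mathrm{d}s$, then bound $\int_{\partial D}\gamma|h|^2\,\mathrm{d}s$ below by $\gamma_{\min}\|h\|^2_{L^2(\partial D)}$. Your explicit remark that the identity forces the cross term to be real (so that passing to its conjugate is harmless) is a small point the paper glosses over, but the substance of the argument is identical.
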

\begin{proof} Using the Robin transmission condition on $\partial D$ in equation \eqref{w}, we have that $$ (Th, h)_{L^2 ( \partial D) } = \int_{\partial D} \gamma (h - w) \overline{h} \, \text{d}s =  \int_{\partial D} \gamma |h|^2 \, \text{d}s - \int_{\partial D} w \, [\![\partial_\nu \sigma \overline{w} ]\!] \, \text{d}s, $$
where $[\![\sigma\partial_\nu w ]\!]\big|_{\partial D} = \gamma h.$
Following a similar technique used to derive \eqref{vf}, we have that 
$$ \int_{\Omega \backslash \overline{D} }\sigma\nabla w\cdot \nabla\overline{w}\, \text{d}x =  \int_{\partial \Omega} w  \sigma\partial_{\nu} \overline{w} \, \text{d}s- \int_{\partial D} w  (\sigma\partial_{\nu} \overline{w})^{+} \, \text{d}s$$
$$\text{and}\quad  \int_{D}\sigma\nabla w\cdot \nabla\overline{w}\, \text{d}x= \int_{\partial D} w (\sigma \partial_{\nu} \overline{w})^{-} \, \text{d}s.$$
Adding both equations above, using the boundary condition on $\partial D$ and that $\sigma \partial_{\nu}w+w=0$ on $\partial \Omega$ yields
 $$ \int_{\Omega} |\nabla w|^2 \, \text{d}x =-\int_{\partial \Omega}|w|^2 \, \text{d}s- \int_{\partial D} w [\![\sigma \partial_\nu \overline{w} ]\!] \, \text{d}s.$$ Therefore, we have that 
 $$(Th, h)_{L^2 ( \partial D) } = \int_{\partial D} \gamma |h|^2 \, \text{d}s + \int_{\partial \Omega} |w |^{2} \, \text{d}s+ \int_{\Omega} \sigma\nabla w\cdot \nabla\overline{w} \, \text{d}x\geq \gamma_{\text{min}} \int_{\partial D}  |h|^2 \, \text{d}s$$ 
which proves the claim.
\end{proof}
From this coercivity result, we can show that $(M-M_0)$ is positive. Indeed
\begin{align*}
 \langle (M- M_0) f , f\rangle_{\partial \Omega} &=  \langle  S^*TS f , f\rangle_{\partial \Omega}\\
    &= \big(TS f , Sf \big)_{L^2(\partial D)} \\
    &\geq \norm{ S f }^2_{L^2(\partial D)} \\
    &=\norm{u}^2_{L^2(\partial D)},
\end{align*} 
where we have used that the operator $T$ is coercive and  $S \colon H^{-1/2} ( \partial \Omega ) \rightarrow L^{2} (\partial D )$ is given by $Sf = u \rvert_{\partial D}$. Therefore, there exists an operator $Q\colon H^{-1/2} (\partial \Omega) \rightarrow L^{2}(\partial \Omega)$ such that $(M-M_0) = Q^{*}Q$, i.e., $(M-M_0)^{1/2} = Q^{*}$. By the above results and Theorem \ref{thm:dataop_prop}, the operator $(M-M_0)$ satisfies the hypotheses of Theorem 2.3 from \cite{harris1}. Specifically, $Range(Q^{*}) = Range(S^{*})$ and that we have the equivalence
\[
\ell \in \text{Range}(S^{*}) \quad \text{if and only if} \quad \liminf_{\alpha \to 0} \langle (M - M_0) f_{\alpha} , f_{\alpha} \rangle_{\partial \Omega} < \infty,
\]
where $f_{\alpha}$ denotes the regularized solution to the equation $(M - M_0) f = \ell$. Given that the data operator $(M-M_0)$ is compact, injective, and has dense range, it is amenable to standard regularization techniques such as Tikhonov or Spectral cut-off. Nevertheless, it is still necessary to relate the interior region $D$ to the range of the adjoint operator $S^{*}$. To achieve this, we reconsider the Robin Green's function for the conductivity operator on the known domain $\Omega$, denoted by $\mathbb{G}(\cdot, z)$ for any fixed point $z \in \Omega$. The key idea of the forthcoming result is that due to the singularity of $\mathbb{G}(\cdot, z)$ at $z$, the trace of Robin Green's function on $\partial \Omega$ fails to lie in the range of $S^{*}$ unless the sampling point $z$ is contained within the interior region $D$.

\begin{theorem} \label{greenchar}
The operator $S^{*}$ is such that for any $z \in \Omega$, 
$$ \mathbb{G} (\cdot , z) \big|_{\partial \Omega} \in Range(S^{*}) \quad \text{if and only if} \quad z \in D.$$
\end{theorem}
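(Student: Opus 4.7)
The plan is to mirror the structure of the analogous range characterization already proved for $G$, adapting it to the boundary-value problem that defines $S^*$ as given in Theorem \ref{adjoint}. The two implications are handled separately.

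For the direction $z \in D \Rightarrow \mathbb{G}(\cdot,z)|_{\partial\Omega} \in \text{Range}(S^*)$, I would explicitly construct a source $g \in L^2(\partial D)$ whose image under $S^*$ is the desired trace. In $\Omega \setminus \overline{D}$ set $v = \mathbb{G}(\cdot,z)$, which is harmonic there because $z \in D$, and satisfies the Robin condition on $\partial\Omega$ by construction. To promote this to a global $H^1(\Omega)$ function, solve the auxiliary interior Dirichlet problem
\begin{equation*}
-\nabla \cdot \sigma \nabla \mu = 0 \text{ in } D \quad \text{with} \quad \mu = \mathbb{G}(\cdot,z) \text{ on } \partial D,
\end{equation*}
and set $v = \mu$ inside $D$. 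Continuity of $\mathbb{G}(\cdot,z)$ across $\partial D$ (valid because the singularity at $z$ is isolated inside $D$) ensures $v \in H^1(\Omega)$. The candidate source is then forced by the jump condition in the problem defining $S^*$:
\begin{equation*}
g \;=\; \gamma v - [\![\sigma\partial_\nu v ]\!]\;=\; \gamma\,\mathbb{G}(\cdot,z)\big|_{\partial D} \;-\; (\sigma\partial_\nu \mathbb{G}(\cdot,z))^{+} \;+\; \sigma\partial_\nu \mu.
\end{equation*}
Interior elliptic regularity ($z \in D$ keeps $\mathbb{G}(\cdot,z)$ smooth in a neighborhood of $\partial D$, hence $\mathbb{G}(\cdot,z) \in H^2(\Omega \setminus D)$, and likewise $\mu \in H^2(D)$) places $g$ in $H^{1/2}(\partial D) \subset L^2(\partial D)$, so $S^*g = v|_{\partial\Omega} = \mathbb{G}(\cdot,z)|_{\partial\Omega}$, as required.

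For the converse, I would argue by contradiction: suppose $z \in \Omega \setminus D$ but $\mathbb{G}(\cdot,z)|_{\partial\Omega} = S^* g$ for some $g \in L^2(\partial D)$. Then the corresponding $v$ and $\mathbb{G}(\cdot,z)$ share the same Dirichlet trace on $\partial \Omega$; since both satisfy $\sigma\partial_\nu(\cdot) + (\cdot) = 0$ on $\partial\Omega$, their normal derivatives on $\partial\Omega$ must also agree. Setting $W_z = v - \mathbb{G}(\cdot,z)$, the difference satisfies $-\nabla\cdot\sigma\nabla W_z = 0$ in $\Omega \setminus (\overline{D} \cup \{z\})$ with vanishing Cauchy data on $\partial\Omega$. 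Applying Holmgren's theorem in the connected component of $\Omega \setminus (\overline{D} \cup \{z\})$ that contains $\partial\Omega$ forces $W_z \equiv 0$ there, i.e., $v = \mathbb{G}(\cdot,z)$ in $\Omega \setminus (\overline{D} \cup \{z\})$. The contradiction arises near the sampling point: by interior elliptic regularity $v$ is bounded (indeed smooth) in a neighborhood of $z$, whereas $|\mathbb{G}(x,z)| \to \infty$ as $x \to z$.

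The main obstacle I anticipate is the reverse implication, specifically verifying that the candidate $g$ actually lies in $L^2(\partial D)$. The construction depends crucially on the hypothesis $z \in D$, which quarantines the singularity of $\mathbb{G}(\cdot,z)$ away from $\partial D$ and legitimizes the $H^2$ regularity used on each side of $\partial D$. The forward direction is essentially a unique continuation argument; the only mild subtlety is applying Holmgren's theorem on the possibly non-simply-connected domain $\Omega \setminus (\overline{D} \cup \{z\})$, but the outer component adjacent to $\partial\Omega$ always carries enough Cauchy data to propagate uniqueness up to $\partial D$ and into any neighborhood of $z$, which is all that is needed.
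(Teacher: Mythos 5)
Your proposal is correct and follows essentially the same route as the paper: the $z\in D$ direction is handled by the identical piecewise construction (interior Dirichlet extension $\mu$, candidate source $g=\gamma v-[\![\sigma\partial_\nu v]\!]$ placed in $L^2(\partial D)$ via $H^2$ elliptic regularity away from the singularity), and the converse is the same Holmgren/unique-continuation contradiction exploiting the blow-up of $\mathbb{G}(\cdot,z)$ at $z$. Your explicit remark that matching Dirichlet traces on $\partial\Omega$ together with the common Robin condition force matching Neumann traces is a point the paper leaves implicit, but otherwise the arguments coincide.
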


\begin{proof} 
$\Rightarrow$) To prove the claim, assume $z \in \Omega \backslash \overline{D}$. Suppose by contradiction that there exists $g_z \in L^2 ( \partial D )$ such that $S^{*} g_z = \mathbb{G} ( \cdot , z) \big \rvert_{\partial \Omega}$. This implies that there exists $\, v_z \in H^{1} ( \Omega )$ such that 
\begin{align*}
- \nabla\cdot\sigma\nabla &v_z = 0 \quad \text{in} \quad \Omega \backslash \partial D \quad \text{with} \quad \sigma \partial_{\nu}v_z +v_z = 0\quad \text{on}\quad \partial  \Omega\\
&\text{and}\quad \quad [\![\sigma\partial_\nu v ]\!]\big|_{\partial D} = \gamma v_z- g_z.
 \end{align*}
Furthermore, we have that $S^*g_z=\mathbb{G} ( \cdot , z) \big \rvert_{\partial \Omega}$ and on $\Omega\backslash D$, 
\begin{align*}
- \nabla\cdot\sigma\nabla &v_z = 0 \quad \text{in} \quad \Omega \backslash \partial D \quad \text{with} \quad \sigma \partial_{\nu}v_z +v_z = 0\quad \text{on}\quad \partial  \Omega\\
&\text{and} \quad v_z=\mathbb{G} ( \cdot , z) \quad \text{on} \quad \partial \Omega.
 \end{align*}
Now, we will define $W_z = v_z - \mathbb{G}(\cdot , z )$ and note that since $z\in \Omega\backslash D$, we have that \linebreak $- \nabla\cdot\sigma\nabla \mathbb{G}(\cdot , z )=0$ in $\Omega \backslash ( \overline{D} \cup \{z\} ).$ Thus, we have that $W_z$ satisfies the following 
$$ - \nabla\cdot\sigma\nabla W_z = 0 \quad \text{in} \quad \Omega \setminus ( \overline{D} \cup \{z\} ) \quad \text{with} \quad W_z \big \rvert_{\partial \Omega} = 0 \quad \text{and} \quad \partial_{\nu} W_z \big|_{\partial \Omega} = 0.$$ 
By Holmgren's Theorem, we conclude that $W_z = 0$ in $\Omega \setminus (\overline{D} \cup \{z\})$. That is, $v_z = \mathbb{G} (\cdot , z )$ in $\Omega \setminus (\overline{D} \cup \{z\})$. By interior elliptic regularity, $v_z$ is continuous at $z \in \Omega \setminus \overline{D}$, but $\mathbb{G} ( \cdot ,z)$ has a singularity at $z$. This proves the claim by contradiction, due to the fact that  
$$| v_z (x)| < \infty \quad \text{ whereas}  \quad | \mathbb{G} (x,z) | \rightarrow \infty \quad \text{as} \quad x \rightarrow z.$$

$\Leftarrow$) Conversely, assume that $z \in D$. Our goal is to construct $g_z \in L^2(\partial D)$ satisfying $S^*g_z=\mathbb{G} ( \cdot , z) \big \rvert_{\partial \Omega}$. Let $\mu \in H^1 (D)$ be the solution to the following problem in $D$
$$ - \nabla\cdot\sigma\nabla \mu = 0 \enspace \text{in} \enspace D \quad \text{with} \quad \mu  = \mathbb{G} ( \cdot \, , z) \quad \text{on}\quad \partial D.$$
Now, define $v_z$ such that 
 \[ v_z = \begin{cases} 
          \mathbb{G}( \cdot \, , z) & \text{in} \enspace \Omega \backslash \overline{D}\\
          \mu & \text{in} \enspace D.
       \end{cases}
    \]
We will show that $v_z$ satisfies all of the conditions imposed by \thmref{adjoint}. 
By definition, we see that $v_z$ is harmonic in $\Omega \backslash \partial D$, $v_z \in H^1(\Omega)$ since there is no jump in the trace across $\partial D$, and $\sigma v_z+v_z=0$ on $\partial \Omega$ as $\partial_{\nu}\mathbb{G}(\cdot,z)+\mathbb{G}(\cdot,z) = 0$ on $\partial \Omega$. Now, we prove that 
$$g_z =  \gamma v_z \big|_{\partial D}- [\![\partial_\nu v_z ]\!]  \big|_{\partial D} $$
 is in $L^2 ( \partial D)$. To this end, let $v_z\in H^1(D)$ with $v_z\big|_{\partial D}\in H^{1/2}\subseteq L^2(\partial D)$. Since \linebreak $\sigma\in L^{\infty}(\partial D)$, it follows that $\sigma v_z\in L^2(\partial D).$ We now consider 
 $$ [\![\sigma \partial_\nu v_z ]\!] = \Big(\sigma \partial_{\nu} \mathbb{G} ( \cdot , z) \Big)^+ - \Big(\sigma \partial_{\nu} \mu \Big) ^{-}\quad \text{on}\quad \partial D .$$
Since  $z \in D$, we have $\mathbb{G} ( \cdot , z ) \in H^{2} ( \Omega \setminus  \overline{D})$. Therefore,  
$$\partial_{\nu}\mathbb{G} ( \cdot , z)\big|^+_{\partial D} \in  H^{1/2} ( \partial D)  \subseteq L^2(\partial D),$$
as we have that $\mu |_{\partial D}= \mathbb{G}(\cdot, z) \rvert_{\partial D} \in H^2(\Omega\setminus D) $
by appealing to interior elliptic regularity (see \cite{evans}). By the Neumann Trace Theorem, we  obtain that 
$$ [\![\sigma \partial_\nu v_z ]\!]  \big|_{\partial D} \in H^{1/2} ( \partial D) \subseteq  L^2 ( \partial D).$$
In addition, it is clear that $ \gamma v_z \rvert_{\partial D}\in L^2 ( \partial D)$. Consequently, we conclude that $g_z \in L^2 ( \partial D)$. Finally, by appealing to Theorem \ref{adjoint} we have $S^* g_z = \mathbb{G}(\cdot  \, , z )|_{\partial \Omega}$, which proves the claim. 
\end{proof}

By applying Theorem 2.3 in \cite{harris1}, we now present the main result of the RFM aimed at recovering an unknown interior region $D$ from the data operator $(M-M_0)$. 

\begin{theorem}\label{factmain}
The data operator $(M - M_0)\colon H^{-1/2}(\partial \Omega) \to H^{1/2}(\partial \Omega)$ uniquely characterizes the domain $D$. Moreover, for any point $z \in \Omega$,
\[
z \in D \quad \text{if and only if} \quad \liminf_{\alpha \to 0} \langle (M - M_0) f_{\alpha}^{z} , f_{\alpha}^{z} \rangle_{\partial \Omega} < \infty,
\]
where $f_{\alpha}^{z}$ is a regularized solution to the equation $(M - M_0) f^{z} = \mathbb{G}(\cdot, z)\big|_{\partial \Omega}$.
\end{theorem}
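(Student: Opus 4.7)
The plan is to assemble the results of Section~\ref{RegFM} and invoke Theorem 2.3 of \cite{harris1} essentially as a black box. First, I would recall the symmetric factorization $(M-M_0) = S^{*}TS$ established in Theorem~\ref{factorization}, together with the coercivity of $T$ from Theorem~\ref{tcoercive}. As already displayed in the paragraph preceding Theorem~\ref{greenchar}, these two facts combine to give the positivity estimate
\[
\langle (M-M_0)f , f \rangle_{\partial \Omega} = (TSf, Sf)_{L^2(\partial D)} \geq \gamma_{\min}\|Sf\|^{2}_{L^2(\partial D)},
\]
so there exists a bounded operator $Q$ with $(M-M_0) = Q^{*}Q$ and, by the standard argument for square roots of positive self-adjoint operators, $\text{Range}(Q^{*}) = \text{Range}(S^{*})$.

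Next, I would verify the remaining hypotheses needed to invoke the abstract range-identification theorem from \cite{harris1}: compactness, injectivity, and dense range of $(M-M_0)$, all of which have already been collected in Theorem~\ref{thm:dataop_prop}. Once these are in place, Theorem~2.3 of \cite{harris1} yields the equivalence
\[
\ell \in \text{Range}(S^{*}) \quad \Longleftrightarrow \quad \liminf_{\alpha \to 0}\, \langle (M-M_0) f_{\alpha}, f_{\alpha}\rangle_{\partial \Omega} < \infty,
\]
valid for any $\ell \in H^{1/2}(\partial \Omega)$, where $f_{\alpha}$ denotes a regularized solution (e.g., Tikhonov, spectral cutoff, or TTLS) of $(M-M_0)f = \ell$.

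Finally, I would specialize this equivalence to the test datum $\ell = \mathbb{G}(\cdot, z)|_{\partial \Omega}$ and apply Theorem~\ref{greenchar}, which states precisely that $\mathbb{G}(\cdot, z)|_{\partial \Omega} \in \text{Range}(S^{*})$ if and only if $z \in D$. Chaining the two equivalences gives the stated characterization. The uniqueness statement then follows from the fact that the range of $S^{*}$ (and hence the set of sampling points $z$ with finite $\liminf$) is entirely determined by $(M-M_0)$, so two different inclusions $D_{1} \neq D_{2}$ cannot produce the same data operator.

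I do not expect any substantive obstacle here: the technical content has been front-loaded into Theorems~\ref{factorization}, \ref{tcoercive}, \ref{thm:dataop_prop}, and~\ref{greenchar}, and the remaining work is bookkeeping to confirm that the hypotheses of the cited abstract theorem are met. The only point requiring a little care is ensuring that the regularization scheme producing $f_{\alpha}^{z}$ is among those covered by Theorem~2.3 of \cite{harris1}, which is standard for Tikhonov and spectral-type filters on compact operators with dense range.
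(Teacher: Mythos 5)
Your proposal is correct and follows essentially the same route as the paper: the paper likewise assembles the symmetric factorization $(M-M_0)=S^{*}TS$, the coercivity of $T$, the positivity estimate giving $\mathrm{Range}(Q^{*})=\mathrm{Range}(S^{*})$, and the mapping properties from Theorem~\ref{thm:dataop_prop}, then invokes Theorem~2.3 of \cite{harris1} and chains the resulting range equivalence with Theorem~\ref{greenchar} to conclude, with uniqueness following exactly as you describe.
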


This theorem provides another qualitative approach to shape recovery of (possibly multiple) regions from boundary data. Furthermore, since $(M-M_0)$ is compact, the result of Theorem \ref{factmain} can be reformulated as 
\[ z \in D \quad \text{if and only if} \quad \sum_{n=1}^{\infty} \frac{1}{s_n} \big \rvert \langle \mathbb{G} (\cdot , z)\rvert_{\partial \Omega} , \ell_n \rangle_{\partial D} \big \rvert^2 < \infty,  \]
following Picard's criteria (see Theorem A.58 in \cite{kircharti}). Here $\{s_n ; \ell_n ; x_n\}_{n=1}^{\infty} \in \mathbb{R}_{+} \times H^{-1/2}(\partial \Omega) \times H^{1/2}(\partial \Omega)$ is the singular value decomposition of $(M-M_0)$. This demonstrates that every sampling point $z \in \Omega$ yields a computable infinite series, and is stronger than Theorem \ref{mainlsm} as it provides an equivalence between the sampling point being contained in the region of interest and its corresponding series being finite. Moreover, this equivalence implies that the data operator $(M - M_0)$ uniquely determines the subregion~$D$. For numerical reconstructions of $D$, we define the imaging functional
\begin{equation}\label{indi2}
W_{RFM} (z) = \Bigg[ \sum_{n=1}^{\infty} \frac{\phi^2_{\alpha}(s_n)}{s_n} \big\rvert  \langle \mathbb{G} (\cdot , z)\rvert_{\partial \Omega} , \ell_n \rangle_{\partial D}\big \rvert^2 \Bigg]^{-1},
\end{equation}
where $\phi_{\alpha}$ represents a filter function corresponding to a certain regularization scheme, as shown in \cite{harris1}. We describe this in more detail in the next section. Note that this imaging functional is positive only when $z \in D$. We remark that since $(M-M_0)$ is compact, its singular values $s_n$ tend rapidly to zero, which may cause numerical instabilities. The filter function $\phi_{\alpha}$ is therefore introduced to regularize the contribution of small singular values. 

\section{Numerical Results}\label{section: numerics}
In this section, we present numerical examples to validate the LSM introduced in Section~\ref{LSM} and the RFM introduced in Section~\ref{RegFM}. All our numerical examples have been done in  \texttt{MATLAB} 2022b. Our implementations of the methods described in this paper will be available at \url{https://github.com/malenaespanol/ReMeShapeEIT}.

For simplicity, we will consider the problem where $\Omega\subset \mathbb{R}^2$ is the unit disk. Thus, the trace spaces $H^{\pm 1/2}(\partial \Omega)$ can be identified with $H^{\pm 1/2}_{\text{per}}([0,2\pi])$. Furthermore, we assume that the conductivity $\sigma$ is constant and positive. In order to apply Theorems \ref{mainlsm} and \ref{factmain}, we need the Robin Green's function on the unit disk. As shown in~\cite{RtDGreens}, taking the polar coordinates \(x=(r,\varphi)\), \(z=(\rho,\theta)\), 
and denoting \(\lambda = \theta - \varphi \), the Robin Green’s function \(\mathbb{G}(x,z)\) has the form
\[\mathbb{G}(x,z) = \mathbb{G}_D(x,z) + \frac{1}{2\pi}\int_{0}^{1} s^{\frac{1}{\sigma}-1}\, 
P(r\rho s, \lambda)\, ds.
\]
Here \(\mathbb{G}_D(x,z)\) is the Dirichlet Green’s function and $P(t,\lambda) = \frac{1-t^2}{1-2t\cos\lambda + t^2}$ is the Poisson kernel. Note that \(\mathbb{G}_D(x,z)\) evaluated on the unit circle is zero. Thus, for all $z \in \Omega$, \[ \mathbb{G}(x,z) \big \rvert_{x\in \partial \Omega} = \frac{1}{2\pi}\int_{0}^{1} s^{\frac{1}{\sigma}-1}\, 
P(\rho s, \lambda)\, ds. \]

To numerically solve Equation \eqref{eq:cont-system}, we denote  ${\bf A} \in \C^{N \times N}$ for $N \in \N$ as the discretized operator of $(M-M_0)$ (see Appendix~\ref{app: A} for details) and the vector ${\bf b}_z = \big [ \mathbb{G}\big ( \varphi_j ,z \big ) \big ]_{j=1}^{N}$, with $\varphi_j \in [0,2\pi)$ uniformly chosen. 

In polar coordinates, $\partial D$ is given by $\rho(\text{cos}(\varphi), \text{sin}(\varphi))$ for some constant $\rho \in (0,1)$. Using separation of variables, we have that for any $\varphi \in [0,2\pi)$, 
\[  (M-M_0) f(\varphi) = \frac{1}{2\pi} \int_{0}^{2\pi} K(\varphi , \phi) f (\phi) \, \text{d}\phi, \quad \text{where} \quad K(\varphi , \phi) = \sum_{|n|=0}^{\infty} \kappa_n e^{in \varphi}.\]
See Appendix for details on the calculation of the coefficients $\kappa_n$ for all $n \in \mathbb{Z}$. In our examples, we approximate the kernel function $K(\varphi,\phi)$ by truncating the series for $|n|=0,\hdots,10$. By having this, we then discretize the truncated integral operator using a collocation method in a 32 equally spaced grid points on $[0,2\pi)$.

In our examples, we add random noise to the matrix $\bf{A}$ such that
$$ \text{\textbf{A}}_{i,j}^{\delta} = \big [ \text{\textbf{A}}_{i,j} \big( 1 + \delta \text{\textbf{E}}_{i,j} \big) \big ]_{i,j=1}^{N} ,$$ 
where the matrix ${\bf E}$ has i.i.d. entries following a uniform distribution over $[-1,1]$, and is normalized so that $\|{\bf E}\|_{2} = 1$. The scalar $\delta$ is the relative noise level added to the data, selected so that $\|{\text{\textbf{A}}^{\delta} - \text{\textbf{A}}}\|_{2} \leq \delta \|{\text{\textbf{A}}}\|_{2}$. 
The imaging functionals defined in Theorems \ref{mainlsm} and \ref{factmain} associated with the LSM and RFM, respectively, are
$$ W_{\text{LSM}} (z) = \|\bf f^{\bf z}_\alpha\|_2^{-1} \quad \mbox{ and } \quad W_{\text{RFM}} (z) =  \sum\limits_{n=1}^{N} \frac{\phi^2_{\alpha}(s_n )}{s_n} \big|({\bf u}_n , {\bf b}_z)\big|^2,$$
where  $\bf f^{\bf z}_\alpha$ is a regularized solution of the system $\bf A^\delta \bf f^{\bf z} = \bf b_{\bf z},$
and $s_n$ and ${\bf u}_n$ denote the singular values and left singular vectors of the matrix ${\textbf{A}}^{\delta}$, respectively. The function $\phi_\alpha(t)$ denotes the filter function defined by a regularization scheme. Formally, $\{\phi_{\alpha}(t)\}_{\alpha > 0}$ is a sequence of functions where $\phi_{\alpha}(t) \colon (0, \sigma_1 ] \rightarrow \mathbb{R}_{\geq 0}$, and verify that for $0<t\leq \sigma_1$,
\begin{equation}\label{filter-functions}
\lim_{\alpha \rightarrow 0} \phi_{\alpha}(t) = 1 \quad \text{and} \quad \phi_{\alpha}(t) \leq C \quad \text{for all} \enspace \alpha >0,
\end{equation}
where $C>0$ is an absolute constant. The filter functions we use in our examples are given by 
\begin{align}\label{filters}
 \phi^{\mathrm{TIK}}_\alpha(t) =  \frac{t^2}{t^2+\alpha}, \,\, \, \textrm{ and } \,\,  \displaystyle{  \phi^{\mathrm{Spectral-cutoff}}_\alpha(t)= \left\{\begin{array}{lr} 1, &  t^2\geq \alpha,  \\
 				&  \\
 0,&  t^2 < \alpha
 \end{array} \right.}  
\end{align}
which corresponds to Tikhonov regularization and the Spectral-cutoff approach, respectively.

In addition, we consider the filtered solution yielded by applying the truncated total least squares (TTLS) approach~\cite{fierro1997regularization}. This regularization approach accounts for noise not only in the right-hand side vector $\textbf{b}_z$  but also in the matrix $\textbf{A}^\delta$, which is appropriate in our setting. Let the singular value decomposition of the augmented matrix $[\textbf{A}^\delta\,|\,\textbf{b}_z] = \bar{\bf U} \bar{\bf \Sigma} \bar{\bf V}^T,$
with singular values $\bar{\sigma}_1 \geq \cdots \geq \bar{\sigma}_{N+1}$. Let $\bar{v}_{N+1,:}$ denote the last row of $\bar{\bf V}$, and therefore $\bar{v}_{N+1,m}$ denote the $m$-th entry of it, and $\bar{v}_{N+1,k+1:N+1}$ the vector containing its last $n-k$ entries. Then, the TTLS filter function is given by
\begin{equation}\label{filter_tls}
\phi^{\mathrm{TLS}}_\alpha(t) = \sum_{m = 1}^{\alpha} \frac{\bar{v}_{N+1,m}^2}{\|\bar{v}_{N+1,k+1:N+1}\|_2^2} \frac{t^2}{ \bar{\sigma}_m^2 -t^2}.
\end{equation}

In all the experiments, the regularization parameter $\alpha$ was selected empirically based on visual inspection of the reconstructions. Developing parameter selection strategies in this setting will be left for future work. 

In the following figures, we display the normalized imaging functionals: $$\quad  W_{\text{LSM}}(z) = \frac{W_{\text{LSM}} (z)}{\|{W_{\text{LSM}} (z)}\|_{\infty}}  \qquad \mbox{ and } \quad W_{\text{RFM}}(z) =\frac{W_{\text{RFM}} (z)}{\|{W_{\text{RFM}} (z)}\|_{\infty}}.$$
Theorems \ref{mainlsm} and \ref{factmain} imply that these normalized indicators satisfy $W(z)\approx 1$ for $z \in D$, and $W(z)\approx 0$ when $ z \notin D$. 

 \begin{figure}[t]
\centering 
\includegraphics[width=1\linewidth]{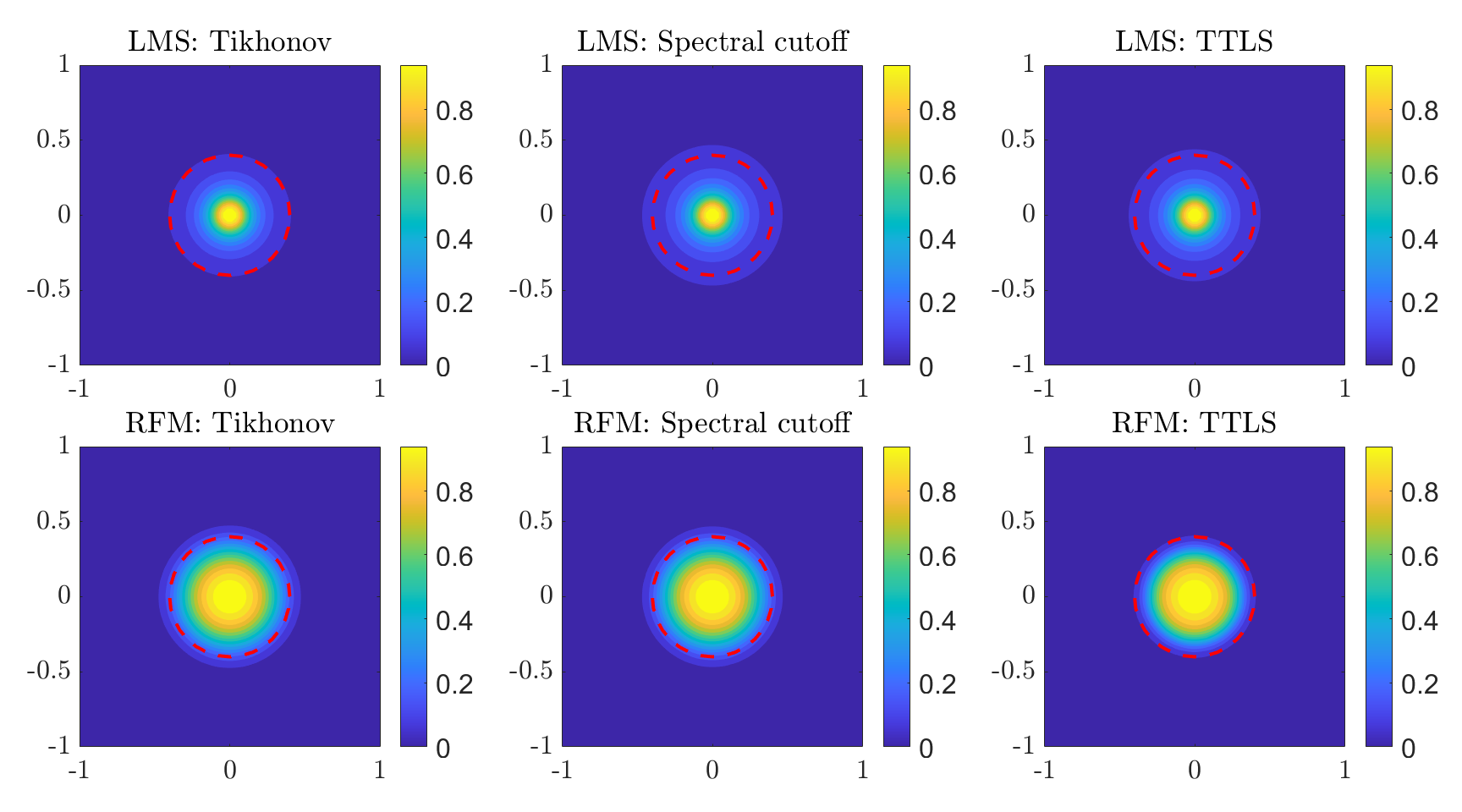}
\caption{Reconstruction of a circular region with $\rho=0.4$ marked with a dashed red line, via both LSM and RFM methods. In this case, there is no noise added in the matrix (i.e., $\delta=0$).} 
\label{fig: recon-circle-0.4-noisefree}
\end{figure}
 
Figures \ref{fig: recon-circle-0.4-noisefree} and \ref{fig: recon-circle-0.4} show the reconstructions of a circular region with radius $\rho = 0.4$ for a noise-free case ($\delta = 0$) and for data corrupted with $1 \% $ relative random noise ($\delta = 0.01$). The dashed line marks the boundary of $D$. For the noise-free case, the regularization parameters used for each approach were $\alpha_{\mathrm{TIK}}=10^{-9}$, $\alpha_{\mathrm{Spec}}=10^{-9}$,  and $\alpha_{\mathrm{TTLS}}=5$, for LSM, and $\alpha_{\mathrm{TIK}}=10^{-16}$, $\alpha_{\mathrm{Spec}}=10^{-16}$, and $\alpha_{\mathrm{TTLS}}=5$ for RFM. For the case with noise, the corresponding regularization parameters were $\alpha_{\mathrm{TIK}}=10^{-9}$, $\alpha_{\mathrm{Spec}}=10^{-9}$,  and $\alpha_{\mathrm{TTLS}}=5$ for LSM, and $\alpha_{\mathrm{TIK}}=10^{-16}$, $\alpha_{\mathrm{Spec}}=10^{-16}$, and $\alpha_{\mathrm{TTLS}}=5$ for RFM. Both methods accurately recover the circular shape in the noise-free case, with RFM reconstructions exhibiting a sharper separation between the bright interior and the blue exterior regions than LSM reconstructions. When $1\%$ noise is added, the reconstructions remain stable: the LSM indicator yields reconstructions that are nearly identical to those of the noise-free case. In contrast, the RFM indicator with Tikhonov and Spectral Cut-off regularization produces reconstructions that exhibit small peak artifacts. The newly proposed TTLS regularization, however, produces smoother reconstructions. Overall, both methods, across all regularization strategies, correctly localize the interior region, demonstrating strong robustness to small levels of noise.

\begin{figure}[t]
\centering 
\includegraphics[width=1\linewidth]{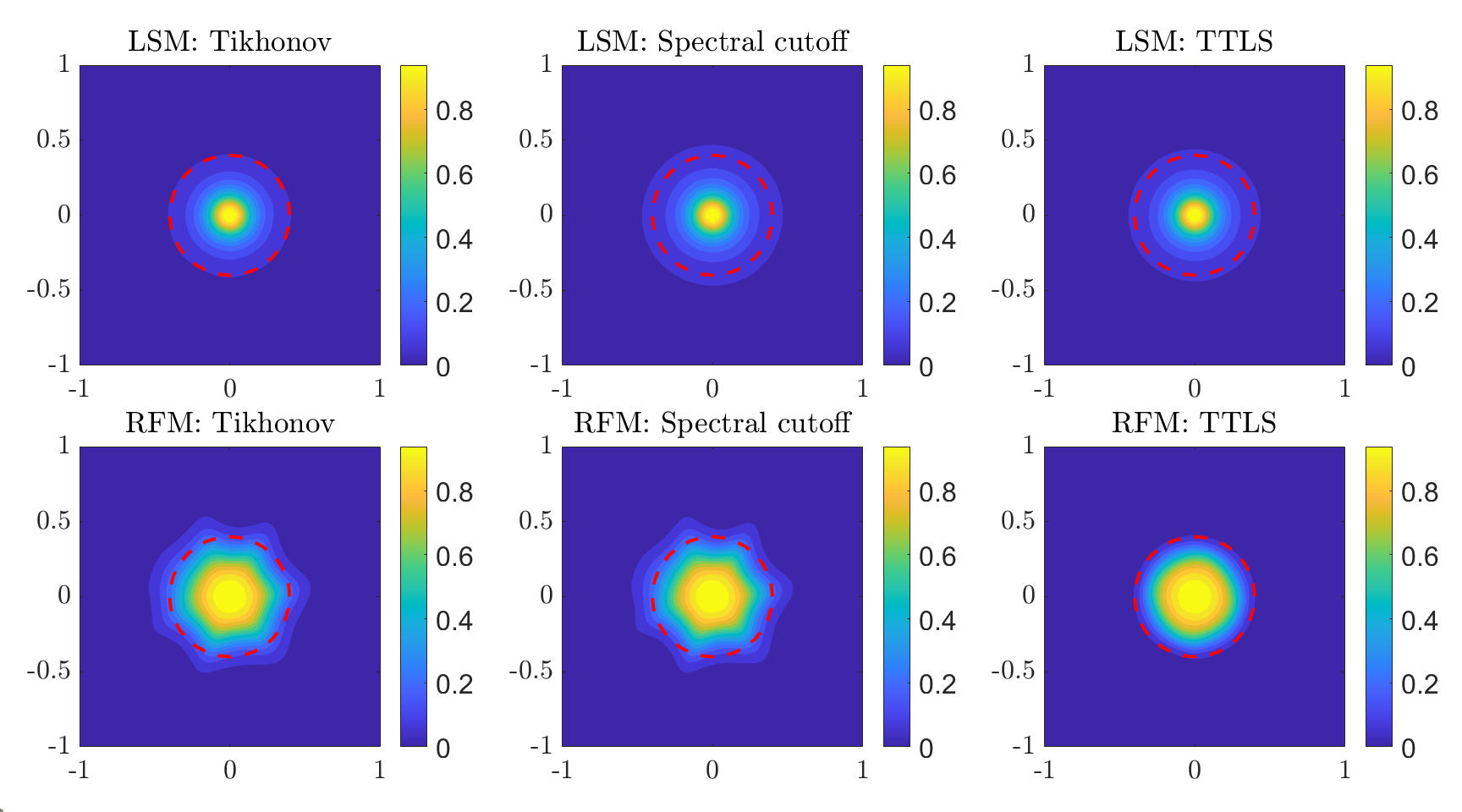}
\caption{Reconstruction of a circular region (radius $\rho = 0.4$) shown by the dashed red line, with conductivity $\sigma = 1$, using both the LSM and RFM methods. The data matrix includes $1\%$ additive noise ($\delta = 0.01$).}\label{fig: recon-circle-0.4}
\end{figure}

In Figures~\ref{fig: recon-circle-0.4-two sigmas} and \ref{fig: recon-circle-0.4-two sigmas_N64}, we again consider a circular inclusion with radius $\rho = 0.4$ and noise level $\delta = 0.01$, but now the conductivity is discontinuous: the background conductivity is $\sigma_{\text{out}} = 1$, while the interior conductivity is $\sigma_{\text{in}} = 10$. The regularization parameters used were $\alpha_{\mathrm{TIK}} = 10^{-11}$, $\alpha_{\mathrm{Spec}} = 10^{-11}$, and $\alpha_{\mathrm{TTLS}} = 10$ for the LSM reconstructions, and $\alpha_{\mathrm{TIK}} = 0$, $\alpha_{\mathrm{Spec}} = 0$, and $\alpha_{\mathrm{TTLS}} = 12$ for the RFM reconstructions. Figure~\ref{fig: recon-circle-0.4-two sigmas} uses $N = 32$ boundary sampling points, whereas Figure~\ref{fig: recon-circle-0.4-two sigmas_N64} uses $N = 64$ equiangular sampling points. In this higher-contrast setting, both methods successfully recover the correct location and shape of the inclusion. The RFM reconstruction again produces a sharper delineation of the boundary, while the LSM indicator is more diffuse inside $D$. Increasing the number of boundary sampling points from $N = 32$ to $N = 64$ noticeably improves the stability of both methods. In particular, the RFM reconstructions with Tikhonov and spectral cut-off regularization for $N = 64$ exhibit fewer spurious peaks and smoother indicator maps. These results confirm that both approaches remain reliable under strong conductivity contrasts, with the RFM together with TTLS regularization yielding the cleanest boundaries. It is noteworthy that, although our theoretical analysis assumes a single constant conductivity, this example demonstrates that both methods remain effective even when distinct conductivities are present inside and outside the inclusion.

Finally, Figure~\ref{fig: recon-circle-0.1} includes a smaller interior region with radius 
$\rho = 0.1$ and the same noise level $\delta = 0.01$ as before, for constant conductivity $\sigma = 1$.
The regularization parameters used were
$\alpha_{\mathrm{TIK}} = 10^{-13}$, $\alpha_{\mathrm{Spec}} = 10^{-15}$, and $\alpha_{\mathrm{TTLS}} = 4$ for LSM,
and $\alpha_{\mathrm{TIK}} = 0$, $\alpha_{\mathrm{Spec}} = 0$, and $\alpha_{\mathrm{TTLS}} = 5$ for RFM. 
Despite the smaller target, both methods successfully identify its location. Surprisingly, the RFM indicator (without regularization) produces better reconstruction than that obtained with TTLS regularization. 

Overall, the numerical results demonstrate that both LSM and RFM reliably reconstruct interior regions of varying size and contrast, with RFM generally producing smoother and more stable indicators under noisy data, as expected.

\begin{figure}[!t]
\centering 
\includegraphics[width=1\linewidth]{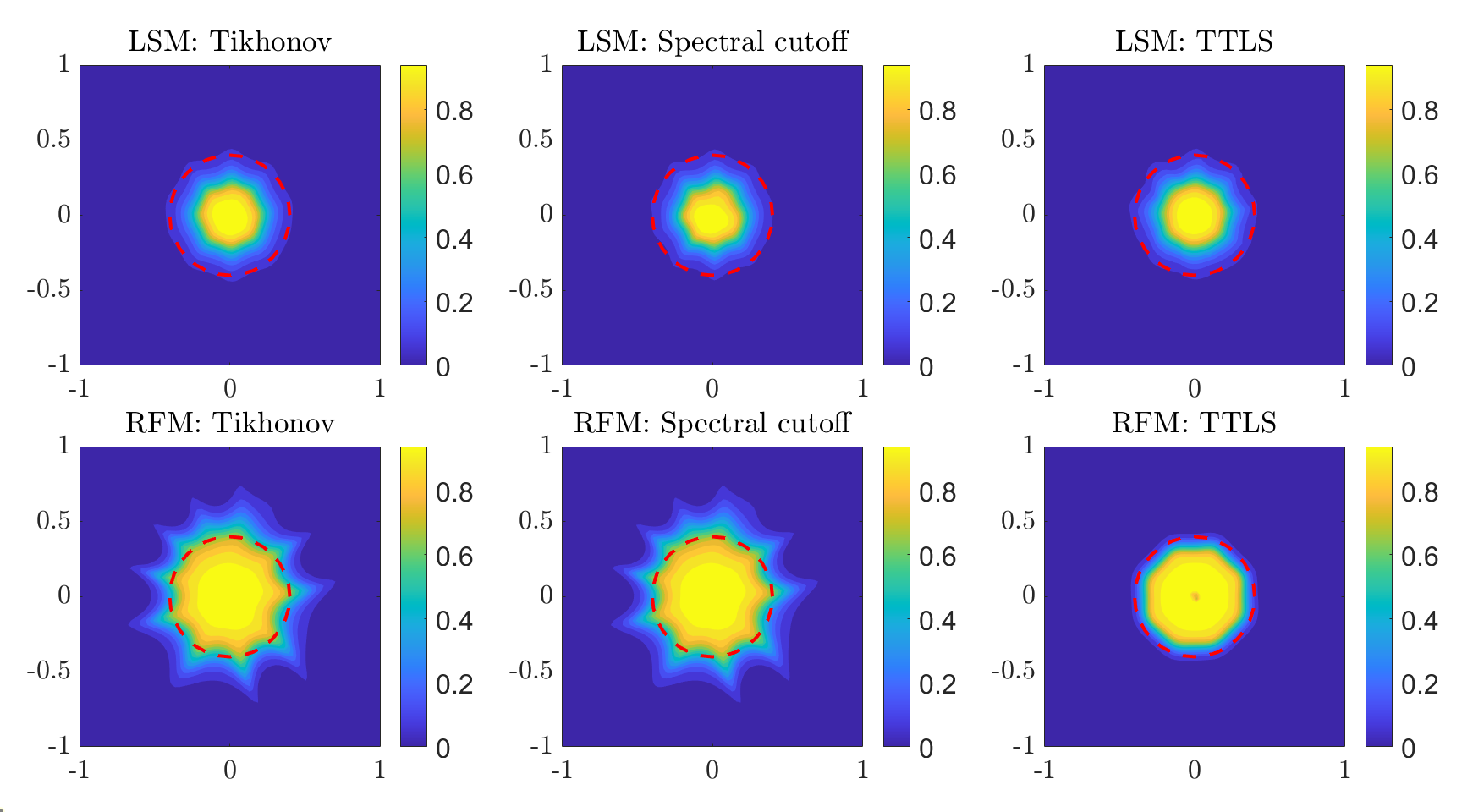}
\caption{Reconstruction of a circular region (radius $\rho = 0.4$) shown by the dashed red line, where the exterior conductivity is $\sigma_{\text{out}} = 1$ and the interior conductivity is $\sigma_{\text{in}} = 10$. The reconstruction is obtained using both the LSM and RFM methods, with $1\%$ additive noise included in the data matrix ($\delta = 0.01$). }
\label{fig: recon-circle-0.4-two sigmas}
\end{figure}

\begin{figure}[!t]
\centering 
\includegraphics[width=1\linewidth]{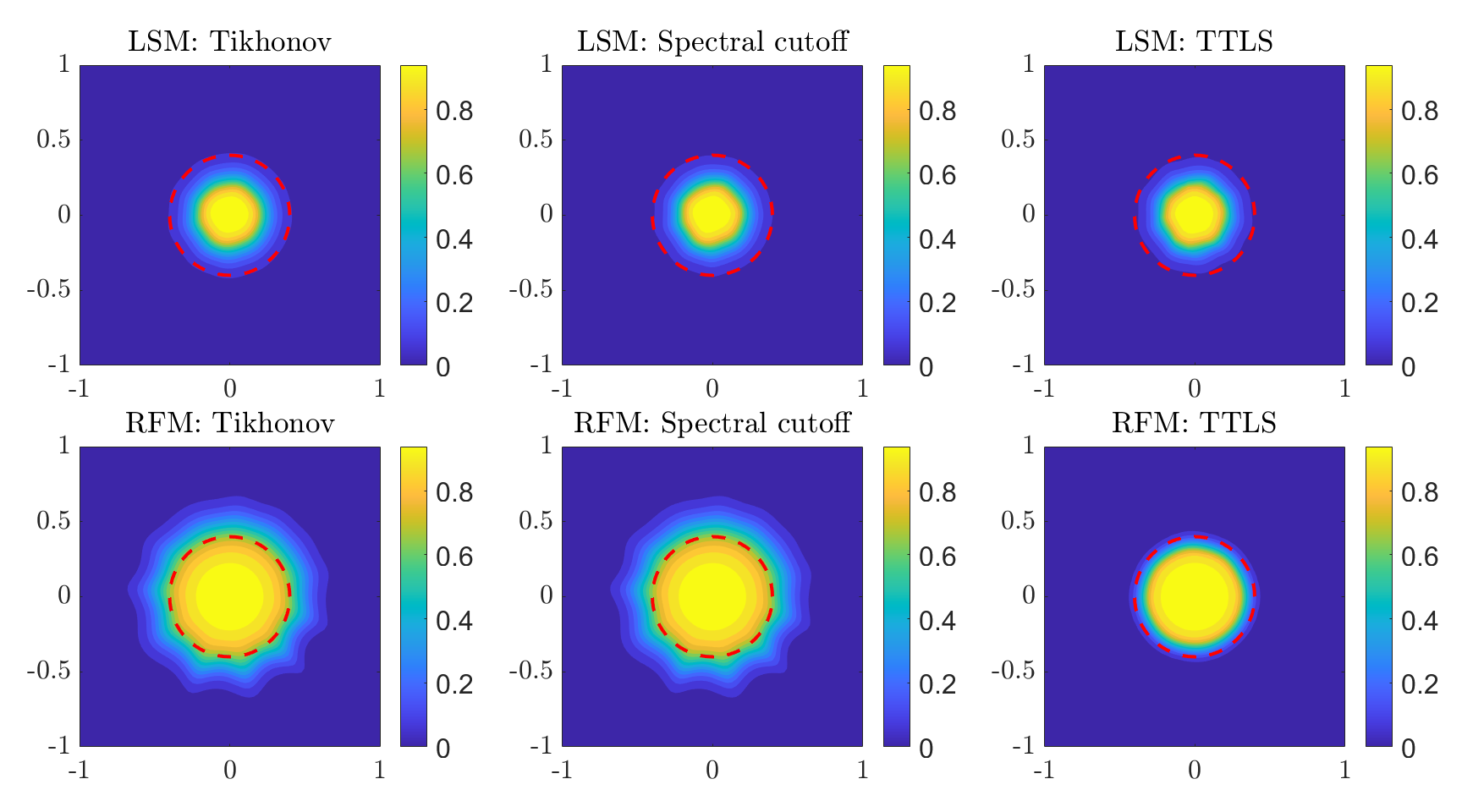}
\caption{Reconstruction of a circular region (radius $\rho = 0.4$) shown by the dashed red line, where the exterior conductivity is $\sigma_{\text{out}} = 1$ and the interior conductivity is $\sigma_{\text{in}} = 10$. The reconstruction is obtained using both the LSM and RFM methods, with $1\%$ additive noise included in the data matrix ($\delta = 0.01$). The boundary data are sampled at $N = 64$ equiangular points.}
\label{fig: recon-circle-0.4-two sigmas_N64}
\end{figure}

\begin{figure}[!ht]
\centering 
\includegraphics[width=1\linewidth]{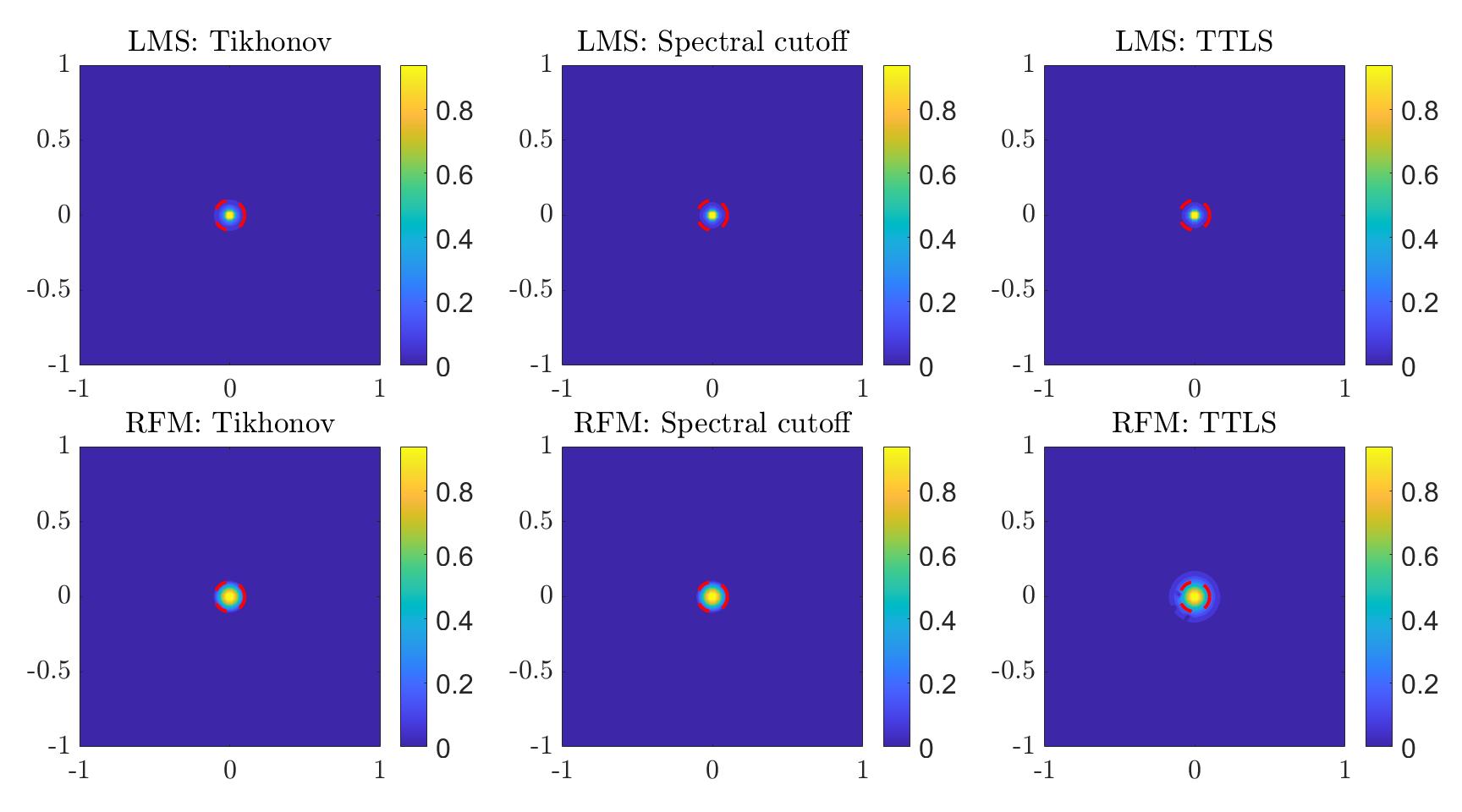}
\caption{Reconstruction of a circular region (radius $\rho = 0.1$) shown by the dashed red line, with conductivity $\sigma = 1$, using both the LSM and RFM methods. The data matrix includes $1\%$ additive noise ($\delta = 0.01$).}\label{fig: recon-circle-0.1}
\end{figure}

\section{Conclusions}\label{end}
In this paper, we investigated two qualitative methods for solving the inverse shape problem in EIT with Robin boundary conditions imposed on both the outer measurement surface and the interior interface. Specifically, we have analyzed the Linear Sampling Method and the Regularized Factorization Method, deriving the corresponding imaging functionals that enable the reconstruction of the region of interest $D$ from the difference of the Robin-to-Dirichlet operators $(M-M_0)$. These approaches provide fast and accurate reconstructions with minimal a priori information about $D$, and exhibit stability with respect to noisy data.

Additionally, we have introduced the Truncated Total Least Squares (TTLS) regularization approach into the qualitative reconstruction framework. To the best of our knowledge, this represents the first application of TTLS to qualitative EIT and inverse shape problems with Robin conditions. The TTLS scheme offers a robust alternative to classical Tikhonov and spectral cut-off regularizations, producing smoother and more stable imaging functionals, particularly in the presence of measurement noise.

Future research directions include extending the framework to the inverse transmission parameter problem to develop a non-iterative method for recovering $\gamma$, as well as exploring the Direct Sampling Method (see, e.g., \cite{DSM-EIT, DSM-DOT, DSM-nf}) within the same Robin-type setting. Another natural extension is to provide analysis when considering non-constant and distinct conductivities (inside and outside the inclusion), thereby accommodating more physically realistic configurations in EIT and related non-destructive evaluation applications.

\section*{Acknowledgments}
M.I.E. was supported through a Karen EDGE Fellowship. G.G. was supported by NSF RTG DMS-2135998. M.I.E. and G.G. gratefully acknowledge the Society for Industrial and Applied Mathematics (SIAM) and the SIAM Postdoctoral Support Program, established by Martin Golubitsky and Barbara Keyfitz, for helping to make this work possible.

\bibliographystyle{plain}
\bibliography{references.bib}

\appendix

\section{Appendix}\label{app: A}
We present the method we used for discretizing $(M-M_0)$ in the unit disk. We assume $\partial D$ is given by $\rho (\text{cos} ( \varphi ), \text{sin} (\varphi))$ for some constant $\rho \in (0,1)$. Since $\Omega$ is assumed to be the unit disk in $\mathbb{R}^2$, we make the ansatz that the electrostatic potential $u(r,\varphi)$ has the following series representation
\begin{equation}\label{taurus}
u(r , \varphi) = a_0 + b_0 \, \text{ln} \, r + \sum_{|n|=1}^{\infty} \left[ a_n r^{|n|} + b_n r^{-|n|} \right] \text{e}^{\text{i}n \varphi} \quad \text{in} \quad \Omega \backslash D,
\end{equation}
which is harmonic in the annular region and also
$$u(r , \varphi) = c_0 + \sum_{|n|=1}^{\infty} c_n r^{|n|} \text{e}^{\text{i}n \varphi} \quad \text{in} \quad D,$$
which is harmonic in the circular region. For simplicity, we assume that the transmission parameter $\gamma >0$ is constant. Thus, we are able to determine the Fourier coefficients $a_n$ and $b_n$ by using the boundary conditions at $r=1$ and $r = \rho$ given by
$$ \sigma \partial_{r} u(1, \varphi )+u(1,\varphi) = f(\varphi), \quad u^{+} (\rho , \varphi ) = u^{-} ( \rho , \varphi ), \quad \text{and}$$
$$ \sigma\partial_{r} u^{+} (\rho , \varphi ) - \sigma\partial_{r} u^{-} (\rho , \varphi ) = \gamma u(\rho , \varphi).$$
We let $f_n$ for $n \in \mathbb{Z}$ denote the Fourier coefficients of $f$. Note that the boundary condition at $r=1$ above gives that $$ a_0 +\sigma b_0 = f_0 \quad \text{for}\quad n=0 \quad \text{and} \quad \sigma|n|a_n+a_n-\sigma|n|b_n+b_n = f_n \quad \text{for all} \enspace n \neq 0.$$
The first boundary conditions at $r = \rho$ give 
$$ a_0+b_0ln(\rho)=c_0 \quad \text{for}\quad n=0  \quad \text{and} \quad a_n\rho^{-|n|}+b_n^{-|n|}-c_n\rho^{|n|} = 0\quad \text{for all} \enspace n \neq 0. $$ Using the Robin transmission condition at the boundary $\partial D$, and after some calculations, we get that for $n\neq 0$
$$ a_0 \frac{b_0}{\rho}-\gamma c_0=0 \quad \text{and} \quad \sigma|n|a_n\rho^{|n|-1}-\sigma b_n^{-|n|-1}-c_n(\sigma|n|\rho^{|n|-1}+\gamma\rho^{|n|})=0 \quad \text{for all} \enspace n \neq 0.$$
For $n=0$, we solve the following system 
$$\begin{bmatrix}
1 & \sigma & 0\\
1 & ln\rho & -1\\
0& \frac{\sigma}{\rho} & -\gamma

\end{bmatrix}\begin{bmatrix}
    a_0\\
    b_0\\
    c_0\\
\end{bmatrix}=\begin{bmatrix}
    f_0\\
    0\\
    0
\end{bmatrix},$$
and $n\neq0$ we solve

$$\begin{bmatrix}
\sigma|n|+1 & -\sigma|n|+1 & 0\\
\rho^{|n|} & \rho^{-|n|} & -\rho^{|n|}\\
\sigma|n|\rho^{|n|-1}& -\sigma\rho^{-|n|-1} & -(\sigma|n|\rho^{|n|-1}+\gamma\rho^{|n|})

\end{bmatrix}\begin{bmatrix}
    a_n\\
    b_n\\
    c_n\\
\end{bmatrix}=\begin{bmatrix}
    f_n\\
    0\\
    0
\end{bmatrix}.$$ Thus, one concludes that $a_0=\alpha_0f_0, b_0=\beta_0f_0,$ and $c_0=\omega_0f_0$ where each of the $\alpha_0$, $\beta_0$, and $\omega_0$ depend on $\sigma$, $\rho$, and/or $\gamma.$ Similarly, when solving for $a_n$, $b_n$, and $c_n$, one finds that they have also the form $a_n=\alpha_nf_n,$ $b_n=\beta_nf_n$, and $c_n=\omega_nf_n$, where $\alpha_n,\beta_n$, and $\omega_n$ depend on $\sigma$, $\rho$ and/or $\gamma.$ Plugging the sequences into \eqref{taurus} gives that the corresponding current on the boundary of the unit disk is given by
\begin{equation}\label{unormal}
u(1 , \varphi ) = \alpha_0 f_0 + \sum_{|n|=1}^{\infty} (\alpha_n + \beta_n) f_n \text{e}^{\text{i}n \varphi}.
\end{equation}

The electrostatic potential for the material without a corroded interface is given by 
\[
u_0 (r , \varphi ) = \kappa_0 + \sum_{|n|=1}^{\infty} \kappa_n r^{|n|} \text{e}^{\text{i}n \varphi},\]
and by applying the boundary condition at $r=1$
$$\sigma \partial_r u_0(1,\varphi ) + u_0 (1,\varphi) = f(\varphi),$$  
we obtain
 \[ \sigma \sum_{|n|=1}^{\infty}|n| \kappa_n e^{in \varphi} + \kappa_0 + \sum_{|n|=1}^{\infty} \kappa_n \text{e}^{\text{i}n \varphi} = f_0 + \sum_{|n|=1}^{\infty}f_n \text{e}^{\text{i}n \varphi}. \]
Therefore, $\kappa_0 = f_0$ and $\kappa_n = \frac{f_n}{\sigma|n|+1}$ for $n \neq 0$. Thus,
\begin{equation}\label{u0normal}
u_0 (1,\varphi ) = f_0 + \sum \frac{f_n}{\sigma |n|+1} e^{in \varphi}.
\end{equation}

Subtracting equation \eqref{u0normal} from \eqref{unormal} gives a series representation of the current gap operator. By interchanging summation with integration, we obtain 
$$(u - u_0)(\varphi) =  (M-M_0)f(\varphi) = \frac{1}{2 \pi} \int_{0}^{2 \pi} K (\varphi , \phi) f ( \phi) \, \text{d} \phi, \quad $$ where $$K(\varphi , \phi ) = (\sigma_0-1) + \sum_{|n|=1}^{\infty} \left(\alpha_n + \beta_n - \frac{1}{\sigma |n| +1}\right) \text{e}^{\text{i}n (\varphi - \phi)}.$$

\end{document}